\newcommand{\N}{\mathbb{N}}
\newcommand{\R}{\mathbb{R}}
\newcommand{\Z}{\mathbb{Z}}
\newcommand{\f}{\rightarrow}
\newcommand{\JSJ}{\textit{JSJ}}
\newcommand{\fix}{\mathsf{Fix}}
\newcommand{\ax}{\mathsf{Axis}}
\newcommand{\ent}{\operatorname{Ent}}
\newcommand{\Ent}{\operatorname{Ent}}
\newcommand{\sys}{\operatorname{sys}}
\newcommand{\diam}{\operatorname{diam}}
\newcommand{\inj}{\operatorname{inj}}
\newcommand{\id}{\operatorname{id}}
\newcommand{\Vol}{\operatorname{Vol}}
\newcommand{\Ric}{\operatorname{Ricci}}
\newtheorem{thm}{Theorem}[section]
\newtheorem*{PdT}{Prime decomposition Theorem}
\newtheorem*{kneser}{Kneser's Conjecture}
\newtheorem*{JSJdT}{JSJ-decomposition Theorem}
\newtheorem*{dico}{Dicothomy}
\newtheorem{prop}[thm]{Proposition}
\newtheorem{lem}[thm]{Lemma}
\newtheorem*{lem*}{Lemma}
\newtheorem{cor}[thm]{Corollary}
\newtheorem*{fatto}{Fact}
\theoremstyle{remark}
\newtheorem{exmp}[thm]{Example}
\newtheorem{rmk}[thm]{Remark}
\theoremstyle{definition}
\newtheorem{defn}[thm]{Definition}
\begin{document}


\title[Local topological rigidity  of non-geometric $3$-manifolds]{Local topological rigidity \\ of non-geometric $3$-manifolds} 
\author[F.Cerocchi]{Filippo Cerocchi}
\thanks{This work has been completed while the first author was a postdoctoral fellow at the Mathematics Department in Rome, Sapienza. He is in debt as well to the CRM \lq\lq E. De Giorgi", SNS, Pisa and to the MPIM, Bonn, where he is currently a postdoctoral fellow.\\}
\address{F. Cerocchi, Max-Planck-Institut f\"ur Mathematik, Vivatsgasse 7, 5311, Bonn.}
\author[A. Sambusetti]{Andrea Sambusetti}
\address{A. Sambusetti, Dipartimento di Matematica \lq\lq G. Castelnuovo'', Sapienza Universit\`a di Roma, P.le Aldo Moro 5, 00185, Roma.\newline}
\email{\newline fcerocchi@mpim-bonn.mpg.de\,;\newline fcerocchi@gmail.com\,;\newline sambuset@mat.uniroma1.it}
\date{May, 2017}

\maketitle

\small

\vspace{-4mm}
{\bf Abstract.} We study Riemannian metrics on compact, torsionless, non-geometric \linebreak $3$-manifolds,   i.e. whose interior does not support any of the eight  model geometries. \linebreak We prove a lower bound ``\`a la Margulis'' for the systole  and a volume estimate for these manifolds, only in terms of an upper bound of entropy and diameter. We then deduce corresponding  local   topological rigidy results  
in the class $\mathscr{M}_{ngt}^\partial (E,D) $ of compact non-geometric 3-manifolds with torsionless fundamental group (with possibly  empty, non-spherical boundary) whose entropy and diameter are bounded respectively by $E, D$. For instance,  this class locally contains only finitely many  topological  types; and closed, irreducible manifolds in this class which are close enough (with respect to $E,D$) are diffeomorphic. Several examples and counter-examples are produced to stress the differences with the geometric case.
\normalsize

\tableofcontents

${}$
\vspace{-20mm}


\section{Introduction}

  Compact, differentiable $3$-manifolds (with or without boundary)  naturally fall into two main mutually exclusive classes: {\em geometric} manifolds, a chosen few,  whose interior supports a complete metric locally isometric to one of  the eight complete, maximal, homogeneous $3$-dimensional geometries
	\footnote{We use here  the term ``geometric'' as in the original definition given in \cite{thur82}; in the  case of manifolds with boundary, variations on this definition are possible and suitable for other purposes (i.e. uniqueness of the model geometries on each piece),  see for instance \cite{Bon}.},  and  {\em non-geometric} manifolds.
	These latter, by the solution of the Geometrization Conjecture,  are either  punctured $3$-spheres, or non-prime manifolds, or  irreducible with  non-trivial JSJ splitting; this has  interesting consequences on the structure of their fundamental group, as we shall see later (notice that also closed Sol-manifolds have a non-trivial JSJ decomposition, but this splitting does not have exactly the same properties as  in the non-geometric case, see  discussion in Section \S4). \pagebreak
	
	In the last  thirty years much effort has been made  to understand  the  model geometries  supported by the pieces of the JSJ-decomposition of  irreducible \linebreak 
	 3-manifolds  (notably, of  atoroidal 3-manifolds)  and {\em special} metrics  on general  \linebreak
	 $3$-manifolds (mostly  because of the simplification of the curvature tensor in dimension 3);  for instance, and by no means claiming to be exhaustive, the works on asymptotically harmonic metrics  \cite{schroeder-shah}-\cite{heber-knieper-shah},  works on nonnegatively Ricci curved metrics \cite{schoen-yau}-\cite{anderson-rodriguez}-\cite{shi}) and, last but foremost, on the Ricci flow  \cite{hamilton}-\cite{librobesson}. This has  led to amazing results,  such as Hamilton's elliptization of manifolds with positive Ricci curvature,   and  culminated in Perelman's solution of the Geometrization and Poincar\'e conjectures.
	
	The Riemannian geometry of non-geometric manifolds, or families of Riemannian metrics on them, deserved considerably less  attention,  in spite of their topological  peculiarities and their genericity:  non-geometric manifolds are very easy to produce, starting form hyperbolic or Seifert-fibered pieces, and  this class encompasses, for instance, the class of all {\em graph manifolds}\footnote{A \textit{graph manifold} is an irreducible $3$-manifold having a non-trivial JSJ-decomposition whose JSJ-components are all Seifert fibered (see \S4.1).}.
	This can be  explained by the  lack of any possible ``best metric'' on this class. Some remarkable exceptions are Leeb's work \cite{leeb} on the existence  of nonpositively curved metrics on aspherical $3$-manifolds, with or without boundary; or  Kapovich-Leeb's   \cite{KL}  and  Behrstock-Neumann \cite{BN2} results on  quasi-isometric rigidity  and quasi-isometry classification of non-geometric manifolds,  and other works on the restricted class of  Seifert and  graph manifolds (for instance \cite{Scott}, \cite{Bon} and \cite{ohshika},  and \cite{BN}, \cite{neu2},   \cite{frigerioLS} for graph manifolds and their higher-dimensional counterparts),   which are however mostly topological in  spirit.
	
	This paper, and  the forthcoming \cite{CSfiniteness}, are entirely devoted to the Riemannian geometry of {\em non-geometric} $3$-manifolds. We want to point out from the outset that all of our results on non-geometric 3-manifolds do not extend to geometric manifolds, as we shall show in each case, with possibly the exception of the class of $3$-manifolds of hyperbolic type, where the possibility of an extension is an interesting open question.
Our first result  is an estimate  \`a la Margulis for compact, non-geometric 3-manifolds with torsionless fundamental group. The original Margulis' Lemma  (established for non-positively curved manifolds $X$ with bounded sectional curvature, and  then generalized by the works of Fukaya-Tamaguchi \cite{fu-ya} and Cheeger-Colding \cite{ch-co} and by Kapovich-Wilking \cite{ka-wi} to manifolds with only a lower Ricci curvature bound), concerns the virtual nilpotency of the subgroup of $\pi_1(X)$ generated by sufficiently small loops at any point $x \in X$. 
For compact, negatively curved manifolds,  this  yields  an estimate of the systole, or of the injectivity radius, in terms of bounds of the sectional curvature and of the diameter (see, for instance, \cite{bur-za}):
\vspace{-2mm}

$$\sys \pi_1 (X) = 2 \inf_{x \in X} \inj(x) \geq \frac{\epsilon_0(n)}{K \cdot  \sinh^{n-1} D}$$

\noindent for any $n$-manifold $X$ with  $-K^2 \leq K_X < 0$ and diameter bounded by $D$.\\
 A similar result,  based more on topological  arguments than on the analysis of the curvature tensor,
	is Zhu's estimate of the contractibility radius for 3-manifolds under controlled   Ricci curvature, diameter and volume (\cite{Zhu}).\\
The systolic estimate we give, for non-geometric 3-manifolds, ignores  curvature, and only uses a normalization by the entropy:

\begin{thm}
\label{app_MTHM_1}${}$ 
	Let $X$ be any  compact, non-geometric Riemannian  $3$-manifold,  with torsionless fundamental group and no spherical boundary components. 
	Assume  that   $\ent(X)\le E$ and that $\diam(X)\le D$: then,  
	\begin{equation}\label{sys_3mfd}
	\sys\pi_1(X)\ge s_0 (E,D) :=\frac{1}{E}\cdot\log\left(1+\frac{4}{e^{26\,E\,D}-1}\right)
	\end{equation}
\end{thm}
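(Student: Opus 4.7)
The plan is to argue by contradiction: suppose $s := \sys\pi_1(X) < s_0(E,D)$; from a systolic loop I will produce a free non-abelian sub-semigroup of $\Gamma := \pi_1(X)$ whose generators have displacement at a base point $\tilde x_0 \in \tilde X$ controlled by $s$ and $D$, and derive a lower bound on $\ent(X)$ which violates the upper bound $E$.

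The structural input is the splitting of $\Gamma$ as a non-trivial graph of groups, which follows from the classification of non-geometric 3-manifolds under the torsion-freeness hypothesis and the absence of spherical boundary components: if $X$ is non-prime one uses the prime decomposition (yielding a free product with no finite factor), while if $X$ is irreducible one uses the non-trivial JSJ splitting (with edge groups $\mathbb{Z}$ or $\mathbb{Z}^2$). In either case $\Gamma$ acts minimally, cocompactly and non-elementarily on a Bass-Serre tree $T$, without a global fixed point. Lift the systolic loop to a hyperbolic isometry $\alpha$ of $\tilde X$ with displacement $|\alpha|_{\tilde x_0}=s$.

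Next I would perform a Tits-alternative / ping-pong argument on $T$. Since every element of $\Gamma$ is representable by a loop based at $x_0$ of length at most $2D + s$, one can pick a second element $\beta$ with $|\beta|_{\tilde x_0}\le 2D+s$ such that either $\beta$ is not a power of $\alpha$ or $\beta$ moves the axis of $\alpha$ in $T$. A case analysis on the actions of $\alpha$ and $\beta$ on $T$ (using torsion-freeness to exclude degenerate vertex-fixing pathologies, and the structure of Seifert or hyperbolic vertex groups in the elliptic case) then produces exponents $p,q\in\mathbb{Z}$ such that $\{\alpha^p,\beta\alpha^p\beta^{-1}\}$ (or a variant) freely generates a semigroup, with displacement of order $L \sim s+D$ at $\tilde x_0$. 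A quantitative orbit-counting closes the argument: the $2^n$ distinct words of length $n$ in a free semigroup on two short generators yield $2^n$ orbit points in $B_{\tilde X}(\tilde x_0, nL)$, hence $\ent(X) \ge (\log 2)/L$, and already this suffices for a qualitative statement. To reach the explicit form $\log(1+4/(e^{26ED}-1))/E$, one refines the counting by separating the two ``directions'' of the free semigroup: along the $\alpha$-direction one packs $\sim R/s$ orbit points per strip (since $|\alpha|=s$), while the strips themselves are indexed by words of displacement $\sim D$. Optimizing the count of orbit points in $B_{\tilde X}(\tilde x_0, R)$ as $R\to\infty$ yields an inequality of the form $(e^{Es}-1)(e^{CED}-1)\ge 4$, with $C=26$ produced by the precise displacement estimates on the second generator.

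The main obstacle is the case in which the systolic element $\alpha$ is elliptic on $T$, i.e.\ the systole is realized by a loop entirely contained in a single JSJ piece (typically along an essential Seifert fiber in a graph manifold). In this case one cannot immediately conjugate $\alpha$ to produce a non-commuting short element; one must exploit the internal algebraic structure of the vertex group together with the non-triviality of the edge stabilizers (gluing tori or annuli) to extract the companion $\beta$ with displacement controlled by $D$ and $E$ alone.
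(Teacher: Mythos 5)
Your overall strategy — use the canonical (prime or JSJ) splitting to get a non-trivial action of $\pi_1(X)$ on the Bass--Serre tree, extract a free sub-(semi-)group with generators of controlled displacement, and then invoke an entropy estimate for free (semi-)groups with a word-weighted metric — is indeed the skeleton of the paper's argument, and your target inequality $(e^{Es}-1)(e^{CED}-1)\ge 4$ is the right shape. But there is a genuine gap, and it is precisely the step you flag as the ``main obstacle'': the \emph{quantitative} control of the exponents $p,q$ in the ping-pong.

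What you are missing is the \emph{acylindricity} of the canonical splitting. It is not enough to know that the action on the tree is non-elementary and cocompact; for the argument to be quantitative, one needs a uniform bound on the diameter of the fixed-point sets of elliptic elements. Without this, when the systolic element $\alpha$ is elliptic on $T$ (the case you identify as the obstacle), there is no a priori bound on the power $p$ needed to make $\alpha$ and $h^p\alpha h^{-p}$ play ping-pong, because $T(\alpha)=\bigcup_n \fix(\alpha^n)$ could be large; and in the hyperbolic case, there is no a priori bound on the overlap of the two axes and hence on the required powers $q$. The paper resolves this by proving a \emph{dichotomy}: a compact, orientable $3$-manifold with no spherical boundary components is either geometric or has a canonical splitting whose Bass--Serre action is $4$-acylindrical (the constant $4$ coming from Wilton--Zalesskii for JSJ-splittings, and $0$ for free-product splittings). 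This is where the exclusion of torus-bundle quotients (i.e., $Sol$-manifolds) really matters, and it is the source of the explicit constant: with $k=4$ one gets $p\le\lceil (k+1)/2\rceil=3$, $|h|_{\tilde x_0}\le 4D$, so the second generator $g_2=h^p g_1 h^{-p}$ has displacement at most $(4k+10)D=26D$. Your proposal as written does not supply any such bound, so the ``displacement of order $L\sim s+D$'' claim is unjustified and the constant $26$ is not reachable. To repair the proof you would need to prove (or cite) the uniform acylindricity constant for the canonical splitting of non-geometric $3$-manifold groups and feed that constant into the quantitative free-product/free-subgroup lemma (Theorem~\ref{freesmgr} in the paper), separately in the elliptic and hyperbolic cases.
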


\noindent Recall that the (volume-){\em entropy} of a compact Riemannian  manifold $X$ is the exponential growth rate of the volume of balls in the universal covering $\tilde{X}$:
\vspace{-3mm}

\begin{equation}\label{entropy}
\Ent (X) = \limsup_{R \rightarrow \infty} R^{-1} \cdot  \log \Vol B_{\tilde X} (\tilde x,R)
\end{equation}

\vspace{-1mm}
\noindent for any choice of $\tilde x \in \tilde X$. Actually, the lift $\tilde{\mu}$ of any finite Borel measure $\mu$ on $X$  can be used in the above formula, obtaining  the same result, cp.  \cite{sam2}. In particular, using the measure $\mu= \sum_{g \in G} \delta_{g \tilde x}$  given by the sum of Dirac masses of one orbit of $G \cong \pi_1(X,x)$ on $\widetilde X$, one sees that the entropy gives the exponential growth rate of pointed homotopy classes of loops in \nolinebreak $X$ (where the length of  classes is measured by the shortest  loop  in the class).  Moreover, it is well known that this  also equals, in non-positive curvature,  the {\em topological entropy} of the geodesic flow on the unitary tangent bundle of $X$, cp. \cite{manning}).   
For closed manifolds,  a lower  bound  of the Ricci curvature $\Ric_X \geq -(n-1)K^2$ implies  a corresponding upper bound of the entropy $\Ent(X) \leq (n-1)K$,  by the classical volume-comparison theorems of Riemannian geometry.
However, entropy  is a much weaker  invariant than Ricci curvature; 
actually, $\Ent(X)$ can be seen as an averaged version of the curvature (this can be given a precise formulation in negative curvature by  integrating  the Ricci curvature on the unitary tangent bundle of $X$ with respect to a suitable measure, cp. \cite{knieper}), and only depends on the large-scale geometry of $X$. 
\vspace{2mm}

Theorem \ref{app_MTHM_1} stems from the interplay between the metric structure and the algebraic properties of $\pi_1(X)$, given by the Prime Decomposition Theorem and the JSJ-decomposition Theorem for irreducible $3$-manifolds. We shall see in Section \S\ref{sectionsystolic} a  more general estimate for manifolds whose fundamental group acts acylindrically on a simplicial tree (which generalizes some estimates in \cite{Cer}).  


\begin{rmk}\label{ngisnecessary}
	The assumption \lq\lq non-geometric\rq\rq\, in Theorem \ref{app_MTHM_1} is necessary.  \\
	Besides the four geometries of sub-exponential growth $\mathbb S^3$,  $\mathbb S^2\times\R$, $\mathbb E^3$ and $Nil$, where it is evident that a simple bound on the diameter does not force any lower bound of the systole, we shall see in section \S5 that  every closed $3$-manifold modelled on $Sol$, $\mathbb H^2\times\R$ or  $\mathbb H^2\widetilde\times \R$   also  admits a sequence of metrics $g_{\varepsilon}$  such that $\ent(X,g_\varepsilon)\le E$, $\diam(X,g_\varepsilon)\le D$ and $\sys\pi_1(X,g_\varepsilon)\f 0$. In all the examples, with the exception of  $\mathbb H^2\widetilde\times \R$, the metrics $g_{\varepsilon}$ are even locally isometric to the respective model geometries. \\
	In contrast, such a family of  metrics cannot  be found on a fixed, closed $3$-manifold $X$ of hyperbolic type; actually, a hyperbolic metric $g_0$ being fixed on $X$ (recall that by Mostow's rigidity Theorem this metric is unique   up to isometries), then the  systole of {\em any} other Riemannian metric $g$ on $X$ is bounded away from zero in terms of its entropy and diameter,  and of the injectivity radius of $(X, g_0)$, in view of the results in \cite{BCG}.
	It is not known to the authors if it is possible to find a universal lower bound as in  (\ref{sys_3mfd}), holding for  Riemannian metrics on {\em all}  closed $3$-manifolds of hyperbolic type. \pagebreak
	 
\end{rmk}

\begin{rmk} 
	Also, the torsionless assumption in Theorem \ref{app_MTHM_1} cannot be dropped. \linebreak
	For any closed $3$-manifold  $X$  and any  $p\geq 2$, one can construct on  the connected sum $Y=X \#(\mathbb{S}^3/\Z_p)$ with a lens space  a family of metrics $g_{\varepsilon}$,  
	with $\epsilon \rightarrow 0$,  such that $\diam(Y, g_{\varepsilon})\le D$, $\ent(Y, g_{\varepsilon})\le E$ and $\sys(Y, g_{\varepsilon})=\varepsilon$ (see   \cite{Cer}, Example 5.4).
\end{rmk}

The assumption on the boundary in Theorem \ref{app_MTHM_1} can be relaxed by asking that $X$ does not have  the homotopy type of a punctured, geometric manifold; notice that one can excise an arbitrarily small ball from a  geometric manifold without  modifying the fundamental group and the systole, and this gives an easy counterexample to (\ref{sys_3mfd}) for punctured geometric manifolds.

\noindent As an immediate consequence of  (\ref{sys_3mfd}) and of Gromov's  systolic inequality for essential manifolds (\cite{gro_frm}, Theorem 0.1.A) we deduce  the following  volume estimate:

\begin{cor}
\label{app_MTHM_1v}
	Let $X$ be any closed, non-geometric Riemannian $3$-manifold  \linebreak with  torsionless fundamental group, which is not homeomorphic to the connected sum of a finite number of copies of $S^2\times S^1$.
	Assume  that $\ent(X)\le E$ and that $\diam(X)\le D$: then,
	\begin{equation}\label{vol_3mfd}
	\Vol(X)\ge C \cdot s_0 (E,D)^3
	\end{equation}
\end{cor}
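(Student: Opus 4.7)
The proof plan combines Theorem \ref{app_MTHM_1} with Gromov's classical systolic inequality: every closed essential Riemannian $n$-manifold $X$ satisfies $\Vol(X) \geq C_n \cdot \sys\pi_1(X)^n$ for a universal constant $C_n$ (\cite{gro_frm}, Theorem 0.1.A). The only nontrivial task is to verify that $X$ is \emph{essential} in Gromov's sense, i.e. that its classifying map $X \to B\pi_1(X)$ sends the fundamental class to a non-trivial element of $H_3(B\pi_1(X))$ (with $\Z$ or $\Z/2$ coefficients, according to orientability).

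The hypotheses are tailored precisely for this. Since $\pi_1(X)$ is torsion-free, the Sphere Theorem forces each irreducible prime summand $M_i$ of $X$ to have contractible universal cover or trivial $\pi_1$; that is, $M_i$ is either $S^3$ or closed aspherical. Non-geometricity rules out $X = S^3$, and the explicit exclusion of connected sums of $S^2 \times S^1$'s ensures that at least one summand $M_0$ in the Prime Decomposition of $X$ is closed aspherical. Essentiality then follows by a standard computation: writing $\pi_1(X) = G_0 * G_1 * \cdots * G_k$ according to the prime decomposition, we have $B\pi_1(X) = BG_0 \vee \cdots \vee BG_k$, so $H_3(BG_0) = H_3(M_0)$ appears as a direct summand of $H_3(B\pi_1(X))$. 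The classifying map can be built by collapsing, in each prime factor, the complement of a small ball onto the corresponding $BG_i$; it thus sends $[X]$ to a class whose projection onto $H_3(M_0)$ equals $[M_0] \neq 0$, and $X$ is essential.

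Applying Gromov's inequality to $X$ then yields $\Vol(X) \geq C \cdot \sys\pi_1(X)^3$, and substituting the lower bound $\sys\pi_1(X) \geq s_0(E,D)$ provided by Theorem \ref{app_MTHM_1} gives (\ref{vol_3mfd}). The main obstacle, such as it is, lies in the essentiality verification, which is purely topological and requires only unpacking the assumptions through the Prime Decomposition and Sphere Theorems.
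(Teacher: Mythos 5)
Your proposal is correct and matches the paper's approach: the paper likewise passes through the Prime Decomposition, uses the torsion-free hypothesis (via the Sphere Theorem) to identify a closed aspherical prime summand $X_1$, notes that the degree-one collapse $X\to X_1$ witnesses $1$-essentiality, and then combines Gromov's systolic inequality with Theorem \ref{app_MTHM_1}. Your classifying-space computation is just a slightly more explicit unpacking of the paper's one-line observation that the degree-one projection onto an aspherical summand suffices.
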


It is worth to stress   that the volume estimate holds in particular for any non-geometric closed  \textit{graph manifold} (\textit{i.e.} any graph manifold which is not a \textit{Sol}-manifold) and for connected sums of such manifolds, with the remarkable exception of  connected sums of copies of $S^2\times S^1$. The volume estimate above 
is particularly interesting in these cases because, for graph manifolds (and connected sums of graph manifolds), the simplicial volume vanishes (see \cite{Soma}, Corollary 1) and   it is thus impossible   to obtain estimates for the volume via the classical arguments of bounded cohomology.

\begin{rmk} \label{exception}
	The exception of a connected sum  of copies of $S^2\times S^1$  in Corollary \ref{app_MTHM_1v}  \linebreak  cannot be avoided. In section \S5, Ex. \ref{collapsingS2S1}, we shall exhibit  a family of metrics $g_{\varepsilon}$ on  $X =\#_k(S^2\times S^1)$,  for any $k \geq1$, with  $\lim_{\varepsilon\f0}\Vol(X , g_\varepsilon)=0$  while,  for all $\epsilon>0$,
	$$\ent(X , g_{\varepsilon} )\le E ,\;\;\diam(X , g_{\varepsilon} )\le D ,\;\;\sys\pi_1(X,  g_\varepsilon )\ge s $$ 
\end{rmk}

\vspace{1mm}
The systolic estimate \ref{app_MTHM_1}  is the keystone of  the local topological rigidity and finiteness results that we shall prove in  Section \S\ref{sectiondim3}.
Namely, consider the classes $$\mathscr M_{ngt} (E,D)  \;\;\;\;\;\; \mbox{(respectively, $\mathscr M^{\partial}_{ngt} (E,D)$ )} $$  of  closed 
(resp. compact,  with  possibly empty boundary,  and no spherical  boundary components)  
  connected, {\em non-geometric} Riemannian $3$-manifolds $X$, 
 {\em with torsionless fundamental  group},   
 whose entropy and diameter are respectively bounded by $E$ and $D$, 
 endowed  with  the Gromov-Hausdorff distance $d_{GH}$. 
	 Recall that, in restriction to  oriented,  irreducible $3$-manifolds  $X$, the following are equivalent:
	
	(i) $X$ is a  $K(\pi, 1)$-space;
	
	(ii) $X$ has torsionless fundamental group;
	
	(iii) $X$ has infinite fundamental group; 
	
	(iv) $X$ is not a quotient of $S^3$.\\
	(The  implication  (i) $\Rightarrow$ (ii) is  standard, see for example \cite{hatcher} Prop.2.45, while \linebreak (ii) $\Rightarrow$  (iii) $\Rightarrow$ (iv)   are trivial;
	  on the other hand, (iv) $\Rightarrow$ (iii)  follows from Perelman's Elliptization Theorem, and (iii)   $\Rightarrow$ (i) is consequence of the JSJ-decomposi\-tion and of the classification of Seifert fibered manifolds.)
	
    The topological type of geometric manifolds, possibly with the exception of manifolds of hyperbolic type, enjoys a lot of freedom under Gromov-Hausdorff convergence: one can easily produce geometric manifolds which are arbitrarily close in the Gromov-Hausdorff distance, while being very different. 
	For instance, 
	 the quotient  of the Heisenberg group or of the $Sol$-group by  the  respective integral lattices $H^3_{\mathbb{Z}}$ and   $Sol_{\mathbb{Z}}$   admit metrics which make them  arbitrarily close 
to a flat $3$-torus,  since all of them can collapse with bounded curvature (and, a fortiori, with bounded entropy) to a circle; similar examples can be produced by taking  a surface of hyperbolic type $\Sigma_g$, and considering  its unit tangent bundle $U\Sigma_g$ and  the product $\Sigma_g \times \mathbb{S}^1$, which both can collapse with bounded curvature to $\Sigma_g$ (see Ex. \ref{excoll}). \linebreak
 Non-geometric manifolds (though often also collapsible, since graph manifolds admit the so called {\em $F$-structures}, \cite{chegro})  are more topologically rigid, as their  topological type is locally determined,  provided that  the entropy stays bounded while  approaching some fixed manifold: 
	
	\begin{thm}\label{rigidity} There exists $\delta_0 \!\!= \delta_0 (E,D) \!>\!0$ such that  for any $X,\!X' \! \in\!  \mathscr M^{\partial}_{ngt} (E,D)$:\\
		(i) if $d_{GH}(X,X') < \delta_0$, then $\pi_1(X)\cong\pi_1(X')$;\\
		(ii) if $X, X'$ are irreducible 
		and $d_{GH}(X,X') < \delta_0 $,  then  $X$ and $X'$ are homotopically equivalent. 
		(One can take $\delta_0 = \frac{1}{40} s_0 (E, D)$, for   $s_0 (E, D)$ as in Theorem \ref{app_MTHM_1}).
	\end{thm}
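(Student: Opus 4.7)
The plan is to establish part (i) via a standard fine-approximation argument whose only non-trivial input is the systolic lower bound $s_0(E,D)$ of Theorem \ref{app_MTHM_1}, and then to deduce part (ii) by observing that irreducible members of $\mathscr{M}^{\partial}_{ngt}(E,D)$ are $K(\pi,1)$-spaces, so that an isomorphism of fundamental groups upgrades to a homotopy equivalence by the classical theory of Eilenberg--MacLane spaces.

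Set $s = s_0(E,D)$ and $\delta_0 = s/40$, and assume $d_{GH}(X,X') < \delta_0$. Choose (not necessarily continuous) maps $f: X \to X'$ and $g: X' \to X$ realizing a correspondence of distortion $< 2\delta_0$, together with $d(gf(x), x) < s/10$ for every $x \in X$. To define $\phi \colon \pi_1(X,x_0) \to \pi_1(X', f(x_0))$, take a loop $\gamma$ based at $x_0$, subdivide it at vertices $x_0, x_1, \ldots, x_N = x_0$ with $d(x_i, x_{i+1}) < s/10$, and join the images $f(x_i)$ consecutively by minimizing geodesics in $X'$; the resulting polygonal loop represents $\phi([\gamma])$.

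The essential verification is well-definedness on based homotopy classes: refining the subdivision, or perturbing $\gamma$ inside its homotopy class, modifies the polygonal loop by a product of triangles whose perimeters are bounded by $3(s/10 + 2\delta_0) < s$, and which are therefore null-homotopic by Theorem \ref{app_MTHM_1} applied to $X'$. The homomorphism property is immediate from concatenation. Performing the symmetric construction with $g$ yields $\psi \colon \pi_1(X', f(x_0)) \to \pi_1(X, gf(x_0))$, and the composition $\psi \circ \phi$ sends $\gamma$ to a polygonal loop whose vertices lie within $s/10$ of the $x_i$'s; the strip between $\gamma$ and its image decomposes into small loops of perimeter $< s$, contractible in $X$ by Theorem \ref{app_MTHM_1}. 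Hence $\psi \circ \phi = \id$ modulo conjugation by a short path between $x_0$ and $gf(x_0)$, and symmetrically $\phi \circ \psi = \id$, which proves (i).

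For part (ii), any irreducible $X \in \mathscr{M}^{\partial}_{ngt}(E,D)$ is aspherical: by the sphere theorem combined with torsionlessness of $\pi_1$ (which rules out two-sided $\mathbb{RP}^2$'s in the non-orientable case) one has $\pi_2(X) = 0$; non-geometricity together with the absence of spherical boundary forces $\pi_1(X)$ to be infinite, whence $\tilde X$ is non-compact and $H_3(\tilde X) = 0$, so $\tilde X$ is acyclic and simply connected, hence contractible by Hurewicz and Whitehead. Since $X$ and $X'$ are both $K(\pi,1)$-spaces, the isomorphism $\phi$ from (i) is realized by a homotopy equivalence $X \to X'$. The main obstacle is quantitative bookkeeping: every auxiliary loop produced during the construction of $\phi$ and $\psi \circ \phi$ must have total length strictly less than $s_0(E,D)$ in order to be contractible, which is what forces $\delta_0$ to be an explicit small fraction of $s_0$; the value $1/40$ is comfortable for all the estimates above, though sharper constants are presumably possible.
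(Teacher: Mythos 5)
Your proof of part (i) is essentially a re-derivation, from first principles, of the result that the paper simply cites: the Sormani--Wei theorem (based on Tuschmann) asserting that two compact Riemannian manifolds $X_1, X_2$ with $d_{GH}(X_1,X_2) < \tfrac{1}{20}\min\{r(X_1),r(X_2)\}$ have isomorphic fundamental groups, where $r$ denotes the semi-locally-simply-connectivity radius. The paper's route is shorter: it notes that the systolic bound $\sys\pi_1 \ge s_0(E,D)$ from Theorem \ref{app_MTHM_1} forces $r(X), r(X') \ge \tfrac{1}{2}s_0(E,D)$ (every loop in a ball of radius $<s_0/2$ decomposes into digons of perimeter $<s_0$, hence is null-homotopic), and then applies Sormani--Wei with $\delta_0 = \tfrac{1}{40}s_0$. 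Your polygonal-approximation construction --- subdividing, pushing forward with a GH-approximation, bounding the error loops by $s_0$ --- is precisely the content of the Tuschmann/Sormani--Wei argument, phrased directly in terms of the systole rather than the auxiliary invariant $r$. Both work; the citation route is cleaner, while yours is self-contained and makes the provenance of the constant $1/40$ more transparent (though your bookkeeping of the well-definedness and $\psi\circ\phi$ steps is compressed and would need the usual ``fill in a homotopy over a fine grid'' elaboration to be fully rigorous). For part (ii) the two proofs coincide: irreducible members of $\mathscr M^{\partial}_{ngt}(E,D)$ are aspherical (sphere theorem plus torsion-free, hence infinite, $\pi_1$), so the isomorphism from (i) is realized by a homotopy equivalence between $K(\pi,1)$-spaces; your write-up merely spells out the asphericity argument that the paper records as an equivalence in \S1.
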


	%
	%
	
     This theorem might be reminiscent of Kapovich-Leeb quasi-isometric (virtual) rigidity results for  the fundamental group of  non-geometric $3$-manifolds \cite{KL0}. However, besides the stronger conclusions 
	(the fundamental group  cannot be determined simply from the quasi-isometry type),
	notice that, without any assumption on the entropy,  one can easily produce non-geometric manifolds  $X$, $X'$ which are arbitrarily close in the Gromov-Hausdorff distance and which  do not have quasi-isometric fundamental groups.  
	Take,  for instance,  the connected sum of an irreducible manifold $X$ with  any,   arbitrarily small in size,  non-simply connected 3-manifold $M$; then, the fundamental group of the resulting manifold $X'=X \# M$  
	is  not quasi-isometric to $\pi_1(X)$, by \cite{PW}.
	Also,  it is well-known that any two closed graphs manifolds have quasi-isometric fundamental group (cp. \cite{BN}), while being far from having isomorphic fundamental   groups.
\vspace{1mm}	
	 
	The fundamental group completely determines the integral homology groups of  closed (connected) orientable $3$-manifolds, as  $H_0(X,\Z) =H_3(X,\Z) = \Z$,  $H_1(X,\Z) = \pi_1(X)/[\pi_1(X,),\pi_1(X)]$ and 
	$H_2(X,\Z) = H^1(X,\Z = H_1(X,\Z)/tor$; thus, in restriction to the subset $\mathscr M_{ngt}(E,D) $, the local rigidity of the fundamental group implies the local constancy  of all homology groups.
	However, by Swarup's finiteness theorem for irreducible $3$-manifolds with given fundamental group and by Kneser's Conjecture, Theorem \ref{rigidity} (i)  has the following stronger consequence: 
		

	\begin{cor}\label{topfiniteness} The diffeomorphism type is locally finite  on the space  $\mathscr M^\partial_{ngt} (E,D)$. 
	\end{cor}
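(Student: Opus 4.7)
The plan is to use Theorem~\ref{rigidity}(i) to reduce the statement to a finiteness result for compact $3$-manifolds of prescribed fundamental group, and then invoke the Prime Decomposition Theorem, Kneser's Conjecture, and Swarup's finiteness theorem, as announced in the excerpt just above the corollary.

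Fix $X_0 \in \mathscr{M}^\partial_{ngt}(E,D)$. By Theorem~\ref{rigidity}(i), for every $X \in \mathscr{M}^\partial_{ngt}(E,D)$ with $d_{GH}(X,X_0)<\delta_0(E,D)$ one has $\pi_1(X)\cong\pi_1(X_0)$. Hence it is enough to show that, for each fixed torsionless group $G$, only finitely many diffeomorphism types in $\mathscr{M}^\partial_{ngt}(E,D)$ satisfy $\pi_1(X)\cong G$.

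To do this I would apply the Prime Decomposition Theorem to write $X = X_1 \# \cdots \# X_k$ with each $X_i$ prime. Since $\pi_1(X)$ is torsionless and $X$ has no spherical boundary components, every $X_i$ is either an $S^2$-bundle over $S^1$ (with cyclic fundamental group) or irreducible with infinite, freely indecomposable fundamental group. By Grushko's theorem and the Kurosh subgroup theorem, the multiset of isomorphism classes of freely indecomposable factors $\pi_1(X_i)$ and the number of cyclic summands are determined by $G$ alone; by Kneser's Conjecture, any such free-product decomposition of $G$ is in turn realised geometrically by a connected sum decomposition of $X$. It then remains to show that each irreducible prime factor is determined up to finite ambiguity by its fundamental group, which is precisely Swarup's finiteness theorem (combined, for factors with non-empty boundary, with its extensions to compact Haken $3$-manifolds with non-spherical boundary through the Jaco-Shalen-Johannson theory).

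The main obstacle is the careful bookkeeping between the closed and the bounded (possibly non-orientable) cases: one must ensure that Swarup-type finiteness applies to every prime factor arising in the class $\mathscr{M}^\partial_{ngt}(E,D)$, which may require passing to the orientation double cover and combining Swarup's original statement with the analogous finiteness for Haken manifolds provided by the \textit{JSJ}-decomposition Theorem. A secondary, purely combinatorial, subtlety is that for each fixed $G$ there are only finitely many ways of distributing the freely indecomposable factors among prime summands, so the number of connected-sum decomposition ``patterns'' is automatically finite and does not affect the argument.
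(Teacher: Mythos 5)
Your proposal is correct and follows essentially the same route as the paper: reduce via Theorem~\ref{rigidity}(i) to finiteness with fixed fundamental group, then combine the Prime Decomposition Theorem, uniqueness of free-product decompositions (Grushko), Kneser's Conjecture, and Swarup's finiteness theorem for irreducible pieces. The paper compresses the reducible-case bookkeeping into one sentence and relies on the implicit orientability of the class (as in its statement of the Prime Decomposition Theorem and the Dicothomy), so the non-orientable worry you raise does not actually arise here.
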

	
	
	Recall that, if $X$ and $X'$ are two closed $3$-manifolds with torsionless fundamental group, then they are homotopy equivalent if and only if they are homeomorphic\footnote{This is no longer true if we assume the manifolds to have non-trivial boundary (even for irreducible manifolds with incompressible boundary) see \cite{jo2} and \cite{swa}.}, if and only if they are diffeomorphic.
	The first equivalence is a consequence of the solution of the Borel Conjecture for closed $3$-manifolds with torsionless fundamental group, which follows from the work of Waldhausen (\cite{Wal}) for Haken $3$-manifolds, and from the work of Turaev \cite{tur} and Perelman's solution  of  the Geometrization Conjecture,  for non-Haken $3$-manifolds. 
	The second equivalence  follows from the work of Moise, Munkres and Whitehead (\cite{Moi}, \cite{Mun1}, \cite{Mun2}, \cite{Whd}) and holds for any $3$-manifold,  even  without the torsionless and closeness assumption. 

	From   Theorem \ref{rigidity} (ii)    we also deduce the following, more explicit: 
	
	\begin{cor}\label{diffrigidity}  
		For  all $X,X' \in \! \mathscr  M_{ngt} (E,D)$ with $X$ irreducible,  if $d_{GH}(X,X') \leq \delta_0$ \linebreak then $X'$ is  diffeomorphic to $X$ (for $\delta_0=\delta_0 (E, D)$ as in Theorem \ref{rigidity}).
	\end{cor}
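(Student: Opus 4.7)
The plan is to reduce the statement to Theorem \ref{rigidity}(ii), combined with the equivalence ``homotopy equivalent $\iff$ diffeomorphic'' for closed $3$-manifolds with torsionless fundamental group, both recalled just above the statement. Since Theorem \ref{rigidity}(ii) requires \emph{both} manifolds to be irreducible while the hypothesis only grants this for $X$, the bulk of the argument will consist in upgrading the fundamental-group isomorphism provided by Theorem \ref{rigidity}(i) to irreducibility of $X'$.

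First, I would apply Theorem \ref{rigidity}(i) (valid since $\mathscr{M}_{ngt}(E,D) \subseteq \mathscr{M}^\partial_{ngt}(E,D)$) to obtain $\pi_1(X) \cong \pi_1(X')$. Because $X$ is closed, irreducible, non-geometric and has torsionless $\pi_1$, the chain of equivalences (i)--(iv) recalled just before Theorem \ref{rigidity} implies that $X$ is aspherical; by Perelman's geometrization, its universal cover is homeomorphic to $\mathbb{R}^3$, so $\pi_1(X)$ has a single end. Stallings' ends theorem then yields that the torsionless group $\pi_1(X)$ does not split as a non-trivial free product; moreover $\pi_1(X) \not\cong \mathbb{Z}$, since no closed aspherical $3$-manifold has cyclic fundamental group ($K(\mathbb{Z},1) \simeq S^1$ is one-dimensional).

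Next, I would apply the Prime Decomposition Theorem to $X'$: writing $X' = Y_1 \# \cdots \# Y_n$ with each $Y_i$ prime, Van Kampen gives $\pi_1(X') = \pi_1(Y_1) \ast \cdots \ast \pi_1(Y_n)$. The previous step forces at most one factor $\pi_1(Y_i)$ to be non-trivial, and the remaining $Y_i$'s (simply connected prime $3$-manifolds) must be $S^3$ by Perelman's elliptization; thus $X'$ is itself prime. Among closed orientable prime $3$-manifolds only $S^2 \times S^1$ fails to be irreducible, and it is excluded here because $\pi_1(S^2 \times S^1) = \mathbb{Z} \not\cong \pi_1(X)$. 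Hence $X'$ is irreducible, Theorem \ref{rigidity}(ii) applies and produces a homotopy equivalence $X \simeq X'$; the Borel conjecture for closed $3$-manifolds with torsionless $\pi_1$ together with the Moise--Munkres--Whitehead smoothing results (both invoked just after Corollary \ref{topfiniteness}) then upgrade this to a diffeomorphism. The main obstacle I anticipate lies in this last paragraph: ruling out the $S^2 \times S^1$ alternative, handling the (mildly non-orientable) case by passing to the orientation double cover, which still belongs to the class and inherits the required isomorphism, and delicately appealing to geometrization are what translate purely algebraic information on $\pi_1$ into topological rigidity of the prime decomposition of $X'$.
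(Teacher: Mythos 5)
Your proposal is correct and follows the same overall structure as the paper's (which proves the Corollary via Proposition~\ref{diffrigiditytech}): fundamental-group isomorphism from Theorem~\ref{rigidity}(i), indecomposability of $\pi_1(X)$, primeness of $X'$, exclusion of $S^2\times S^1$, homotopy equivalence from Theorem~\ref{rigidity}(ii), then Borel plus Moise--Munkres--Whitehead smoothing. It departs from the paper only in two subsidiary steps. For indecomposability, the paper simply appeals to Kneser's Conjecture as stated in \S4.1: a nontrivial free splitting of $\pi_1(X)$ would produce a nontrivial connected-sum decomposition of the irreducible $X$. You instead route through asphericity, contractibility of the universal cover, one-endedness, and Stallings' ends theorem --- correct but heavier, and not otherwise used in the paper (also, contractibility of $\tilde X$ already gives one end; you do not need the deeper fact that $\tilde X\cong\R^3$). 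To rule out $X'\cong S^2\times S^1$ you compute $\pi_1(S^2\times S^1)=\Z$, whereas the paper just notes that $S^2\times S^1$ is geometric while $X'\in\mathscr M_{ngt}(E,D)$ is not. Finally, your closing remark about passing to the orientation double cover is a red herring: that cover would not inherit the fundamental-group isomorphism, and in any case the non-orientable twisted bundle $S^2\widetilde\times S^1$ (the only other closed, prime, non-irreducible $3$-manifold) is also geometric with $\pi_1\cong\Z$, so both your criterion and the paper's dispose of it directly.
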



Notice that Corollary \ref{diffrigidity}  shows, in particular,  that the Gromov-Hausdorff distance defines a metric (quotient) structure on the diffeomorphisms classes of irreducible manifolds in   $\mathscr  M_{ngt} (E,D)$; this  is false   for reducible manifolds:

\begin{rmk}\label{primeisnecessary}
	Irreducibility  in Theorem \ref{rigidity} (ii) and Corollary \ref{diffrigidity} is necessary. \\
	 We shall see in the Ex. \ref{nonnprig} a pair of  closed, non-geometric, non-homotopically equivalent  
	 $3$-manifolds $Y$ and  $\bar Y$, which   admit  sequences of metrics $(g_n)_{n \in \mathbb{N}}$, $(\bar g_n)_{n \in \mathbb{N}}$ with uniformly bounded entropy and diameter, such that the  Gromov-Hausdorff distance between $(Y, g_n)$ and $(\bar Y, \bar g_n)$ goes to zero when $n \rightarrow \infty$.
\end{rmk}





\vspace{1mm}
These results should be compared to general finiteness and convergence theorems in Riemannian geometry, under  classical  curvature, diameter, and volume (or injectivity radius) bounds.
In particular,  Corollary \ref{diffrigidity} can be interpreted as  a quantitative version (in restriction to  non-geometric  3-manifolds with infinite fundamental group)  of Cheeger-Colding celebrated diffeomorphism Theorem \cite{ch-co}, saying that if a sequence of smooth $n$-manifolds $X_k$, with  Ricci curvature uniformly bounded from below, tends in the Gromov-Hausdorff convergence  to a smooth \linebreak  $n$-manifold $X$,  then $X_k$ is diffeomorphic to $X$ for $k \gg 0$. 
Notice however that, despite the  restricted class of application, our results  only need a control of a much weaker invariant than Ricci curvature:  it is easy to exhibit convergent families of Riemannian manifolds with  bounded entropy, where the Ricci curvature is not uniformly bounded  (see \cite{rev} for some enlightening examples). Also, Cheeger-Colding's diffeomorphism theorem does not apply without the strong assumption that the limit space is a manifold, 
whereas  Corollary \ref{diffrigidity} shows that the $X_k$'s are always diffeomorphic for $k \gg0$.
 In this perspective,  it is somewhat surprising that, for non-geometric manifolds,
a bound on  the entropy suffices to capture the local topological type, 
and actually does a better service than a  Ricci curvature bound  in the case of manifolds with boundary (notice in fact that we do not need any supplementary curvature  assumption on the boundary).
\vspace{1mm}

	Finally, let us  state the following finiteness theorem under Ricci curvature bounds, as an immediate corollary  of Theorem \ref{diffrigidity}   and Gromov's precompactness theorem (or, equivalently, of the volume estimate
 (\ref{app_MTHM_1v})  and Zhu's homotopy finiteness theorem, cp.  \cite{Zhu}, Theorem 1):
	
	\begin{cor}\label{fnt_glo}
		Let $\mathscr M_{ngt} (Ric_K, D)$ be the family of closed, non-geometric,\linebreak Riemannian $3$-manifolds with torsionless fundamental group, satisfying  the bounds
$\Ric\ge -(n-1)\,K^2$ and $\diam \le D$.
		The number of diffeomorphism types in $\mathscr M_{ngt} (Ric_K,D)$ is finite.
	\end{cor}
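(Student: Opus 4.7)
The plan is to combine Corollary \ref{topfiniteness} (local finiteness of the diffeomorphism type on $\mathscr M^\partial_{ngt}(E,D)$) with Gromov's Precompactness Theorem. The key preliminary observation is that a lower Ricci bound automatically controls the entropy: by the Bishop--Gromov volume comparison theorem, any closed Riemannian $n$-manifold $X$ with $\Ric_X \geq -(n-1)K^2$ satisfies $\Ent(X) \leq (n-1)K$, as already recalled in the paragraph following Theorem \ref{app_MTHM_1}. Specializing to $n=3$ and setting $E = 2K$, we therefore obtain the inclusion
\[
\mathscr M_{ngt}(Ric_K, D) \;\subseteq\; \mathscr M_{ngt}(E, D),
\]
so that all the entropy--diameter estimates of the previous sections are at our disposal on the class under study.

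Next, I would invoke Gromov's Precompactness Theorem, which asserts that the family of closed Riemannian $n$-manifolds satisfying $\Ric \geq -(n-1)K^2$ and $\diam \leq D$ is precompact, hence totally bounded, with respect to the Gromov--Hausdorff distance. Consequently, for every $\delta>0$ the subset $\mathscr M_{ngt}(Ric_K, D)$ can be covered by finitely many Gromov--Hausdorff $\delta$-balls.

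To conclude, I would apply this covering property with $\delta = \delta_0(E,D)$, the universal radius furnished by Theorem \ref{rigidity} and used in Corollary \ref{topfiniteness}. By local finiteness of the diffeomorphism type on $\mathscr M^\partial_{ngt}(E,D)$, each of the finitely many $\delta_0$-balls in this cover contains only finitely many diffeomorphism classes, whence the whole class $\mathscr M_{ngt}(Ric_K, D)$ does. The only point requiring care is that the local finiteness radius $\delta_0$ depends solely on $E$ and $D$, and not on the specific manifold being considered; this is precisely the uniform content of Theorem \ref{rigidity}, so no real obstacle arises. (Alternatively, one could replace the cover-and-count argument by applying directly Corollary \ref{app_MTHM_1v} to get a uniform volume lower bound and then quoting Zhu's homotopy finiteness theorem \cite{Zhu}, since for closed $3$-manifolds with torsionless fundamental group homotopy equivalence implies diffeomorphism.)
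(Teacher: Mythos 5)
Your proof is correct and follows the same route as the paper: Bishop--Gromov to get the inclusion $\mathscr M_{ngt}(Ric_K,D)\subseteq\mathscr M_{ngt}(2K,D)$, Gromov precompactness to cover by finitely many $\delta_0$-balls, and Corollary~\ref{topfiniteness} on each ball. The alternative you sketch in parentheses (volume bound plus Zhu's theorem) is also the one the paper already notes in the introduction, so both paths match the source.
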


	Comparing with  Zhu's  theorem,  we are dropping the lower bound assumption on the volume;  we pay this choice by restricting ourselves to the set of torsionless non-geometric $3$-manifolds.
	We believe that a similar  finiteness result  should hold  for non-geometric manifolds  satisfying only  a bound on entropy instead of Ricci curvature; 
	this point of view will be developed elsewhere by the authors  \cite{CSfiniteness}.
\vspace{2mm}

\footnotesize
{\sc Aknowledgments.}  We are in debt  with G. Courtois and R. Coulon for several valuable discussions, and with S. Gallot for his attention to this work and encouragement.
\normalsize


\section{Nonabelian, rank 2  free subgroups}\label{sectionfree}

In this section we recall some facts about $k$-acylindrical actions of groups on simplicial trees. The aim is to give quantitative results on the existence of  $2$-generators free subgroups starting from two  prescribed elliptic or hyperbolic generators.
 \vspace{1mm}

\noindent We recall that, given a group $G$ acting by automorphisms on a  tree $\mathcal T$ {\em without  edge inversions} ( i.e. no element  swaps the vertices of some edge),  the elements of $G$ can be divided into two classes: elliptic and hyperbolic elements. They can be distinguished by their \textit{translation length}, which is defined,  for  $g\in G$,  as 
$$\tau(g)=\inf_{\mathsf v\in\mathcal T}d_{\mathcal T}(\mathsf v,g\cdot\mathsf v)$$
  where $d_{\mathcal T}$ denotes the simplicial distance of ${\mathcal T}$, i.e. with all edges of unit length.
 \linebreak
If $\tau(g)=0$ the element $g$ is called \textit{elliptic}, otherwise it is called \textit{hyperbolic}. \\
We shall denote by $\fix (g)$ the set of fixed points of an elliptic element $g$,  and by $T(g) = \bigcup_{n \in \Z^\ast} \fix(g^n)$ the set of  points which are fixed by some non-trivial power of $g$; these are  (possibly empty) connected subtrees of $\mathcal T$. If $h$ is a hyperbolic element then $\fix (h)=\varnothing$ and $h$  has a unique axis on which it acts by translation, denoted $\ax(h)$; each element on the axis of $h$ is translated at distance $\tau(h)$ along  the axis, whereas elements at distance $\ell$ from the axis are translated of $\tau(h)+2\ell$. 

\noindent  Let $\mathcal T_G$  be the minimal subtree of  $\mathcal T$ which is $G$-invariant: the action of  $G$   is said {\em elliptic} it  $\mathcal T_G$  is a point, and {\em linear} if $\mathcal T_G$ a line; in both cases we shall say that the action of $G$ is {\em elementary}.
 We also recall that  an action without edge inversions is called \textit{$k$-acylindrical} if the set $\fix(g)$  has diameter less than or equal to $k$,  for any  elliptic $g\in G$. 
 The notion of $k$-acylindrical action on a tree is due to Sela (\cite{Sela}), and arises naturally in the context of Bass-Serre theory, as we shall see later.
 \vspace{3mm}
 
 Groups acting $k$-acylindrically on trees are well-known to possess free subgroups. We need a quantitative version of this, estimating, for every prescribed, non-commuting pair of elements $g_1,g_2$, the maximal length of a word in $g_1,g_2$  generating with $g_1$ (or with some bounded power of $g_1$) a free sub(semi-)group:

\begin{thm}[Quantitative free product subgroup theorem]\label{freesmgr} ${}$\\
Let $G$ be a 
group  acting  $k$-acylindrically  
on a simplicial tree $\mathcal T$:  
\vspace{1mm}

\noindent (i) if $g_1, g_2\in G$ are elliptic  and $\fix(g_1)\cap\fix(g_2)=\varnothing$, then the group   $\langle g_1, h^p \,g_1\,h^{-p} \rangle$ is  a rank $2$  free product, for $h= g_1g_2$ and $p \geq (k+1)/2$;
\vspace{1mm}

\noindent (ii) if $g\in G$ is elliptic and  $h\in G$ is hyperbolic, then 
 the group   $\langle  g, h^{p}g\ h^{-p} \rangle $ is a   rank $2$ free product, for $p\ge k+1$; 

\vspace{1mm}

\noindent (iii) if $h_1, h_2\in G$ are  hyperbolic with $\ax(h_1) \neq  \ax (h_2)$, then: \\
-- if $\diam \left( \ax(h_1) \cap \ax (h_2) \right)   \leq 3k$, then $ \langle  h_1^q, h_2^q \rangle $   is rank $2$ free subgroup,  \linebreak for $q \ge 3k+1$; \\
--  if $\diam \left( \ax(h_1) \cap \ax (h_2) \right)   > 3k$, then either $\langle h_1, h_2^{p}h_1h_2^{-p} \rangle $ or  $\langle  h_2,h_1^{p}h_2h_1^{-p}  \rangle $ is  a  rank $2$ free subgroup,   for  $p\ge 3$;\\
-- in any case (even without the assumption of $k$-acylindricity)  either $\{h_1, h_2\}$ or  $\{h_1^{-1}, h_2\}$  freely generate  a  rank $2$  free semigroup. 
\end{thm}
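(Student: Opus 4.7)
I would prove all three parts by suitable versions of the Ping-Pong Lemma applied to the action on $\mathcal T$: a free-product Ping-Pong when the candidate generators are elliptic (parts (i) and (ii)) and a hyperbolic Ping-Pong (using attracting/repelling neighborhoods on the tree) when they are hyperbolic (part (iii)). The central input from $k$-acylindricity is the following: if $g$ is elliptic and $\phi$ is any isometry of $\mathcal T$ with some $z\in\fix(g)\cap\phi\fix(g)$, then $g$ fixes the whole segment $[z,\phi^{-1}z]$, so acylindricity forces $d(z,\phi^{-1}z)\le k$; combining this with the lower bound $d(z,h^{-p}z)\ge p\,\tau(h)$ valid for any hyperbolic $h$ forces the fixed subtrees of the two candidate generators to be disjoint.

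\smallskip

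\emph{Parts (i) and (ii).} For (ii), since $\tau(h)\ge 1$ on a simplicial tree without edge inversions, the bound $p\ge k+1$ yields $p\,\tau(h)>k$ and hence $\fix(g)\cap h^p\fix(g)=\varnothing$ by the principle above. For (i), the key preliminary fact is the standard Bass--Serre observation that if $g_1,g_2$ are elliptic with $\fix(g_1)$ and $\fix(g_2)$ disjoint, then $h=g_1g_2$ is hyperbolic, its axis contains the bridge between the two fixed subtrees, and $\tau(h)=2\,d(\fix(g_1),\fix(g_2))\ge 2$; the sharper bound $p\ge(k+1)/2$ then suffices for disjointness of $\fix(g_1)$ and $h^p\fix(g_1)$. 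With the two fixed subtrees disjoint, the standard free-product Ping-Pong closes the argument: split $\mathcal T$ along the midpoint of the bridge between them into two half-trees $T$ and $T'$ containing $\fix(g_1)$ and $h^p\fix(g_1)$ respectively, and check that every nontrivial power of $g_1$ sends $T'$ into $T$, and symmetrically for the conjugate generator.

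\smallskip

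\emph{Part (iii).} When $\diam(\ax(h_1)\cap\ax(h_2))\le 3k$, the inequality $q\,\tau(h_i)\ge q\ge 3k+1$ pushes points entirely beyond the common segment under each power $h_i^{\pm nq}$, and the classical hyperbolic Ping-Pong gives freeness of $\langle h_1^q,h_2^q\rangle$. In the long-overlap regime $\diam(\ax(h_1)\cap\ax(h_2))>3k$ the plan is to distinguish whether $h_1,h_2$ translate the common segment in the same or opposite direction. In the same-direction case, an explicit computation on the overlap shows that $[h_1,h_2]$ fixes a subsegment of length at least $\diam(\ax(h_1)\cap\ax(h_2))-\tau(h_1)-\tau(h_2)$; since this fixed set is nonempty the commutator is elliptic (hyperbolic isometries have empty fixed set) and, being nontrivial (else $\ax(h_1)=\ax(h_2)$), acylindricity forces $\tau(h_1)+\tau(h_2)>2k$. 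So at least one $\tau(h_i)$ exceeds $k$, and conjugating the other generator $h_j$ by $h_i^3$ shifts $\ax(h_j)$ along $\ax(h_i)$ by $3\tau(h_i)>3k$, reducing the new overlap $\ax(h_j)\cap h_i^3\ax(h_j)$ enough that the hyperbolic Ping-Pong between $h_j$ and $h_i^3 h_j h_i^{-3}$ closes. The opposite-direction case would be handled analogously via the product $h_1h_2$ (which is elliptic or has controlled axis). The delicate point, and the main obstacle I anticipate, is turning this overlap/translation-length bookkeeping into the clean universal threshold $p\ge 3$ of the statement.

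\smallskip

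\emph{Free semigroup statement.} Acylindricity is not needed here. Two hyperbolic isometries of a tree with distinct axes have either four or three distinct ends at infinity, the three-end case occurring precisely when the axes share a common end. A case analysis shows that the only configuration in which the direct attracting-fixed-point Ping-Pong fails for $\{h_1,h_2\}$ is $h_1^+=h_2^+$ (attracting ends coincide). In every other case one finds small neighborhoods of the two distinct attracting ends, disjoint and each mapped into itself by the corresponding generator and into the other by the opposite generator, yielding the semigroup Ping-Pong. When $h_1^+=h_2^+$, replacing $h_1$ by $h_1^{-1}$ swaps its attracting and repelling ends and lands in one of the favourable configurations, completing the proof of the alternative.
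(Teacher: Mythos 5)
Your overall strategy (ping-pong on the tree, using acylindricity to force disjointness of fixed sets, plus the commutator trick in the hyperbolic case) is the same as the paper's, but as written there are two concrete gaps.

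\emph{Parts (i)--(ii).} You show $\fix(g)\cap h^p\fix(g)=\varnothing$, but the half-tree ping-pong you then invoke needs more. If you split $\mathcal T$ at the midpoint $w$ of the bridge between $\fix(g_1)$ and $h^p\fix(g_1)$, the claim \lq\lq every nontrivial power of $g_1$ sends $T'$ into $T$\rq\rq\ fails whenever some power $g_1^n$ has a strictly larger fixed subtree $\fix(g_1^n)\supsetneq\fix(g_1)$ reaching past $w$: then $g_1^n$ has fixed points in $T'$. What you actually need is the disjointness of the sets $T(g_1)=\bigcup_{n\in\Z^\ast}\fix(g_1^n)$ and $h^pT(g_1)$, which is exactly what the paper establishes (equivalently, $\fix(g^{\ell})\cap h^p\fix(g^{\ell})=\varnothing$ for \emph{all} $\ell\in\Z^\ast$). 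Your \lq\lq central input\rq\rq\ does prove this (apply it verbatim with $g^\ell$ in place of $g$), but as written the proof is stated only for $\fix(g)$ and the ping-pong setup uses the wrong midpoint.

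\emph{Part (iii), long overlap.} You correctly derive that $[h_1,h_2]$ fixes a segment of length at least $d-\tau(h_1)-\tau(h_2)$, hence by acylindricity $\tau(h_1)+\tau(h_2)\ge d-k$. But you then only extract \lq\lq at least one $\tau(h_i)$ exceeds $k$\rq\rq\ and conclude that a shift by $3\tau(h_i)>3k$ \lq\lq reduces the overlap enough.\rq\rq\ This is not enough: the overlap has length $d$, which can be much larger than $3k$, so $3\tau(h_i)>3k$ does \emph{not} clear it. The correct conclusion to extract from your own estimate is $\max\tau(h_i)\ge (d-k)/2>d/3$ (using $d>3k$), so that $3\tau(h_i)>d$ and the conjugated axis $h_i^3\ax(h_j)$ is actually \emph{disjoint} from $\ax(h_j)$ (here one also needs to check that the two translates of $\ax(h_j)$, hanging off $\ax(h_i)$ at distinct points, cannot meet off $\ax(h_i)$). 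The paper gets the bound $\max\tau(h_i)>d/3$ directly by exhibiting a length-$d/3$ subsegment of $J$ fixed by the commutator; its choice (first third for same-direction, middle third for opposite-direction) simultaneously handles the opposite-direction case that you only gesture at. Finally, a minor point: in the free-semigroup sketch the failure configurations are not only $h_1^+=h_2^+$ but also $h_1^\pm=h_2^\mp$; the replacement $h_1\mapsto h_1^{-1}$ still works, but the case analysis is more involved than stated.
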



In order to prove Theorem \ref{freesmgr},  we shall need the  following basic facts  (cp.  \cite{butal}, \cite{kaweid}):
\pagebreak

\begin{lem}\label{fact} Let $g_1, g_2 $ be elliptic elements  of a group $G$ acting without edge inversions 
on a simplicial tree $\mathcal T$: \\
(i) if   $\fix(g_1) \cap \fix(g_2) = \emptyset$, then $g_1g_2$ is hyperbolic with translation length 
$$\tau(g_1g_2)= 2 d_{{\mathcal T}} (\fix(g_1), \fix(g_2)) \;;$$
(ii) if   $T(g_1) \cap T(g_2) = \emptyset$, then the group $\langle g_1,  g_2\rangle$  is a rank $2$  free product. 

\end{lem}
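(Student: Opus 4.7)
The plan is to reduce both parts to a single basic observation: if $g\in G$ is elliptic and $x\notin\fix(g)$, then setting $y:=\pi_{\fix(g)}(x)$ one has $d_{\mathcal T}(x,gx)=2\,d_{\mathcal T}(x,\fix(g))$, with $y$ the midpoint of $[x,gx]$. Indeed, $g$ fixes $y$ so $g\cdot[y,x]=[y,gx]$; the concatenation $[x,y]\cup[y,gx]$ is then a genuine geodesic (no backtrack at $y$) because the first edge $e$ of $[y,x]$ at $y$ cannot be $g$-invariant: otherwise the no-edge-inversion hypothesis would force $g$ to fix both endpoints of $e$, placing a point of $\fix(g)$ strictly closer to $x$ than $y$, contradicting minimality.

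\textbf{Part (i).} Let $[p,q]$ be the bridge from $\fix(g_1)$ to $\fix(g_2)$, with $p\in\fix(g_1)$, $q\in\fix(g_2)$, and set $\ell=d_{\mathcal T}(p,q)>0$. Applying the basic observation with $g=g_1$ and $x=q$ gives $d_{\mathcal T}(q,g_1q)=2\ell$ and shows that $[q,g_1q]$ passes through $p$; since $g_2q=q$, this already yields $d_{\mathcal T}(q,hq)=2\ell$ for $h=g_1g_2$. I would then show by induction that the concatenation $\gamma_n:=\bigcup_{k=0}^{n-1}h^k\cdot[q,hq]$ is a geodesic of length $2n\ell$. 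The internal no-backtracking check at each $h^kq$ reduces, after translating by $h^{-k}$, to verifying that at $q$ the edges toward $p$ and toward $h^{-1}p=g_2^{-1}p$ are distinct; if they coincided, then $p$ and $g_2^{-1}p$, both at distance $\ell$ from $q$ in the same direction, would agree, forcing $p\in\fix(g_2)$, a contradiction. The resulting linear growth $d_{\mathcal T}(q,h^n q)=2n\ell$ shows $h$ is hyperbolic with $\tau(h)=2\ell$ and $q$ on its axis.

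\textbf{Part (ii).} First note that $T(g_i)$ is a subtree: if $g_i^n x=x$ and $g_i^m y=y$, then $g_i^{nm}$ fixes the whole geodesic $[x,y]$. By hypothesis the subtrees $T(g_1),T(g_2)$ are disjoint, hence joined by a bridge $[p,q]$ of positive length with $p\in T(g_1)$, $q\in T(g_2)$. Choose a point $m$ in the interior of an edge of this bridge, so that $\mathcal T\setminus\{m\}$ splits into exactly two components $A_1\ni p$ and $A_2\ni q$; minimality of the bridge forces $T(g_i)\subseteq A_i$. Now for any $g_1^n\neq 1$ and any $y\in A_2$, we have $y\notin\fix(g_1^n)\subseteq T(g_1)$, so by the basic observation the midpoint of $[y,g_1^n y]$ is $\pi:=\pi_{\fix(g_1^n)}(y)\in A_1$, and $y$, $g_1^n y$ lie in distinct components of $\mathcal T\setminus\{\pi\}$. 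Since $\pi\in A_1$ does not disconnect $\{m\}\cup A_2$, the component of $\mathcal T\setminus\{\pi\}$ containing $y$ is the unique one meeting $A_2$, and any other component is contained in $A_1$; thus $g_1^n y\in A_1$. Hence $g_1^n A_2\subseteq A_1$ and symmetrically $g_2^n A_1\subseteq A_2$ whenever $g_i^n\neq 1$, and the free-product version of the ping-pong lemma yields $\langle g_1,g_2\rangle\cong\langle g_1\rangle *\langle g_2\rangle$.

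The main obstacle I expect is the careful bookkeeping of the no-backtracking conditions: the basic reflection observation, the inductive step in (i), and the ping-pong inclusion in (ii) all come down to distinguishing two edges at a common vertex, always ultimately via the no-edge-inversion hypothesis together with the disjointness of the relevant (extended) fixed sets. Once this is isolated as the single reflection lemma above, both parts fall into place essentially immediately.
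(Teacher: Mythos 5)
The paper does not actually prove this lemma: it is cited as a known fact from [Bu--Ta] and [Ka--We], so there is no in-paper argument to compare against. Your overall strategy --- isolating the reflection observation that $y=\pi_{\fix(g)}(x)$ is the midpoint of $[x,gx]$, then concatenating translates of $[q,hq]$ along the bridge in (i) and running ping-pong across a cut-point of the bridge in (ii) --- is the standard and correct one. However two steps do not hold as written.

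In part (i), the justification of the no-backtracking claim at $q$ --- ``if [the edges toward $p$ and toward $g_2^{-1}p$] coincided, then $p$ and $g_2^{-1}p$, both at distance $\ell$ from $q$ in the same direction, would agree'' --- is not a valid inference in a tree: two points at equal distance from $q$ whose geodesics share their first edge need not coincide (a tripod is a counterexample). The right argument is a second application of your own reflection observation, now to $g_2$ at the point $p$: since $q=\pi_{\fix(g_2)}(p)$ and $p\notin\fix(g_2)$, the point $q$ is the midpoint of $[p,g_2^{-1}p]$, hence $[q,p]$ and $[q,g_2^{-1}p]$ leave $q$ through distinct edges. (Equivalently: if $e$ is the first edge of the bridge at $q$ and $g_2e=e$, the no-inversion hypothesis forces $e\subset\fix(g_2)$, contradicting minimality of the bridge.) In part (ii), the two-set ping-pong lemma you invoke ($g_1^nA_2\subseteq A_1$, $g_2^nA_1\subseteq A_2$) gives the free product only under the standard side hypothesis that at least one of $\langle g_1\rangle,\langle g_2\rangle$ has order at least $3$; if both $g_1,g_2$ are involutions, a word such as $(g_1g_2)^n$ only yields $(g_1g_2)^nA_1\subseteq A_1$, which is compatible with $(g_1g_2)^n=1$. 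The cleanest repair is to note that your argument already shows $g_i^n m\in A_i$ (since $m$ lies in the interior of a bridge edge, so $m\notin T(g_i)$ and the reflection observation applies at $m$); then every nontrivial reduced alternating word $w$ satisfies $wm\neq m$, with no side condition needed. Alternatively, treat the order-$2$ case separately: part (i) makes $g_1g_2$ hyperbolic, hence of infinite order, which already forces $\langle g_1,g_2\rangle\cong\Z_2*\Z_2$. Finally, a minor remark: your argument that $T(g_i)$ is a subtree via $g_i^{nm}$ tacitly assumes $g_i^{nm}\neq 1$, which may fail for torsion $g_i$; it is simpler to observe that each $\fix(g_i^n)$ is a subtree containing the nonempty subtree $\fix(g_i)$, so their union is connected.
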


\begin{lem}\label{fact2} 
Let $g_1, g_2 $ be hyperbolic elements  of a group $G$  acting without edge inversions  on a simplicial tree $\mathcal T$, and let $J=\ax(h_1) \cap \ax (h_2)$: if 
$$\diam ( J ) <  n\,\min\{\tau(h_1),\,\tau(h_2)\}$$
then   $h_1^n$ and $h_2^n$ generate a nonabelian, rank $2$  free subgroup of $G$.
\end{lem}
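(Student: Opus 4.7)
My plan is to establish the lemma by a ping-pong argument on $\mathcal{T}$: I will build four pairwise disjoint ``escape regions'' in the tree and a basepoint outside them, then verify the classical ping-pong hypotheses for the elements $H_i := h_i^n$. The quantitative hypothesis $\diam(J) < n\cdot \min\{\tau(h_1),\tau(h_2)\}$ will enter in exactly one step: it guarantees that a single application of $H_i$ translates along $\ax(h_i)$ by strictly more than the length of $J$, so that $H_i$ ``clears'' the overlap region in one step.

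First I will set up the geometry. Since $\diam(J)<\infty$, the axes $A_1 := \ax(h_1)$ and $A_2 := \ax(h_2)$ are distinct geodesic lines, and $J$ is either empty, a single point, or a bounded segment of $\mathcal{T}$. For each $i$, let $q_i\in A_i$ be the endpoint of $J$ on the side of the attracting end $\omega_i^+$ of $h_i$, and $p_i$ the endpoint on the repelling side (if $J=\varnothing$, take $p_i=q_i$ to be the point of $A_i$ closest to $A_{3-i}$, and use the connecting bridge in place of $J$). I define $E_i^+$ to be the component of $\mathcal{T}\setminus\{q_i\}$ containing the $\omega_i^+$-ray of $A_i$, and $E_i^-$ the component of $\mathcal{T}\setminus\{p_i\}$ containing the $\omega_i^-$-ray of $A_i$; and choose a basepoint $v$ in the interior of $J$ (or the midpoint of the bridge). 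A direct tree-geometric check, using only that $A_1,A_2$ are distinct lines meeting in a bounded convex set, shows that $E_1^+, E_1^-, E_2^+, E_2^-$ are pairwise disjoint and that $v$ lies in none of them.

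Next I will verify the contraction property: for every $x\in \mathcal{T}\setminus E_i^-$ and every integer $k\geq 1$, $H_i^k(x)\in E_i^+$ (and symmetrically $H_i^{-k}(x)\in E_i^-$ for $x\notin E_i^+$). The argument is short: $x\notin E_i^-$ forces the nearest-point projection $\pi_i(x)$ of $x$ onto $A_i$ to lie at $p_i$ or strictly on the attracting side of $p_i$; since $H_i$ acts on $A_i$ as a translation by $n\tau(h_i)>\diam(J)=d_{\mathcal{T}}(p_i,q_i)$, the projection $\pi_i(H_i(x))=H_i(\pi_i(x))$ lies strictly past $q_i$ on the attracting side, and hence $H_i(x)\in E_i^+$; iterate for $k\geq 1$.

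Finally I will conclude by the standard ping-pong lemma. For any reduced word $w=H_{i_1}^{k_1}\cdots H_{i_r}^{k_r}$ with $i_j\neq i_{j+1}$ and $k_j\in\mathbb{Z}\setminus\{0\}$, I evaluate $w(v)$ from right to left: since $v$ avoids every escape region, $H_{i_r}^{k_r}(v)\in E_{i_r}^{\operatorname{sign}(k_r)}$; at each subsequent step the current point lies in $E_{i_j}^{\pm}$, which by $i_{j-1}\neq i_j$ is disjoint from $E_{i_{j-1}}^{-\operatorname{sign}(k_{j-1})}$, so applying $H_{i_{j-1}}^{k_{j-1}}$ sends it into $E_{i_{j-1}}^{\operatorname{sign}(k_{j-1})}$. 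Iterating yields $w(v)\in E_{i_1}^{\operatorname{sign}(k_1)}$, which does not contain $v$, forcing $w\neq 1$; thus $H_1,H_2$ freely generate a rank $2$ free subgroup (nonabelianness also follows from $A_1\neq A_2$, as commuting hyperbolic tree isometries share an axis). The main obstacle I foresee is verifying pairwise disjointness of the four escape regions uniformly in the two sub-configurations where the attracting ends of $h_1,h_2$ sit on the same side of $J$ versus on opposite sides, and in the degenerate cases $J=\{pt\}$ and $J=\varnothing$; defining $E_i^\pm$ intrinsically from each $h_i$'s own attracting/repelling end makes the verification case-insensitive, but it still requires carefully tracking which component of $\mathcal{T}\setminus\{p_i\}$ or $\mathcal{T}\setminus\{q_i\}$ contains each of the three other ``corners''.
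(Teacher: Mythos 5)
The paper does not actually prove Lemma \ref{fact2}: it is stated as one of the ``basic facts (cp. \cite{butal}, \cite{kaweid})'' immediately after Lemma~\ref{fact}, with no proof environment following, and is then invoked black-box inside the proof of Theorem~\ref{freesmgr}~(iii). So there is no in-paper argument to compare your proposal against; the relevant comparison is with the standard proof pattern in the cited literature, which is indeed ping-pong on the Bass--Serre tree.

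Your argument is correct and essentially complete, and it is the argument one would extract from the references. A few points in its favor. The definition of the escape regions $E_i^{\pm}$ as components of $\mathcal{T}\setminus\{q_i\}$ and $\mathcal{T}\setminus\{p_i\}$ (equivalently: $E_i^+=\{x:\pi_i(x)\text{ strictly past }q_i\text{ toward }\omega_i^+\}$) is the right intrinsic choice, since it makes the three subcases -- $J$ a nondegenerate segment with $h_1,h_2$ translating in the same direction, in opposite directions, or $J$ a point/empty with a bridge -- all reduce to the same component-counting verification: past each endpoint of $J$ (or of the bridge) the two axes diverge immediately because $J$ is the \emph{exact} overlap, so the relevant half-axes lie in distinct components of $\mathcal{T}$ minus that endpoint, and the remaining cross-pairs ($E_1^+\cap E_2^-$ etc.) are separated because membership forces the projection onto $A_i$ to lie strictly past opposite endpoints of $J$. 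The quantitative hypothesis is used exactly where you say: $x\notin E_i^-$ forces $\pi_i(x)\in[p_i,\omega_i^+)$, and since $n\tau(h_i)\ge n\min\{\tau(h_1),\tau(h_2)\}>\diam(J)=d(p_i,q_i)$, one application of $H_i=h_i^n$ pushes $\pi_i(x)$ strictly past $q_i$, i.e.\ into $E_i^+$; iterating is automatic because $E_i^+$ is forward-invariant. With $v$ chosen in (the interior of) $J$ or the midpoint of the bridge, one has $\pi_i(v)\in[p_i,q_i]$ so $v$ avoids all four sets, and the basepoint version of the ping-pong lemma you invoke (orbit of $v$ under a nontrivial reduced word in $H_1^{\pm1},H_2^{\pm1}$ lands in $E_{i_1}^{\operatorname{sign}(k_1)}$, which omits $v$) gives freeness of rank two; nonabelianness is then automatic. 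The one place where you flagged uncertainty -- the case analysis for disjointness -- does go through by the component-counting just described, so the proof is sound as written.
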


\begin{proof}[Proof of Theorem \ref{freesmgr}]  To prove (ii) it is sufficient, by Lemma \ref{fact} (ii),  to show that  $T(g) \cap T(g') = \emptyset$,  for $g'=h^{p}g\ h^{-p}$, and $p\ge k+1$.  This is equivalent to show that  
$ \fix(g^{\ell_1})\cap\fix( g'^{\ell_2} )=\varnothing$  for all $\ell_1,\ell_2\in\Z^\ast$.
As  $\fix(g^\ell)\supseteq\fix(g)$ for any  $\ell\in\Z^*$, this last condition  is equivalent to: 
\begin{equation} \label{fixl}
\fix(g^{\ell})\cap\fix(h^{p}g^{\ell}  h^{-p})=\varnothing,\;\;\;\; \forall \ell\in\Z^*
\end{equation} 
We  consider  the two cases:   $\fix(g^{\ell})\cap \ax(h)=\varnothing$ or $\fix(g^{\ell})\cap \ax(h)\neq\varnothing$.\\ 
\noindent In the first case  the projection of $\fix(g^{\ell})$ onto $\ax(h)$ is one point, denoted   $v_*$. Since $\fix(h^{p}g^{\ell}h^{-p})=h^{p}\cdot\fix(g^{\ell})$, then   $h^{p}\cdot v_*$ is the projection of $\fix(h^{p}g^{\ell_1\ell_2}h^{-p})$ onto $\ax(h)$. This implies that   (\ref{fixl}) holds  for all  $p>0$ as in this case
$$d_{\mathcal T}\left(\fix(h^{p}g^{\ell}h^{-p}),\fix(g^{\ell})\right) 
    \ge d_{\mathcal T}(v_*, h^{p} v_*) +2\
    \ge p\tau(h)+2 $$

\noindent In the second case, let $J=\fix(g^{\ell})\cap \ax(h)$ 
 and notice that $\diam(J) \leq k$ by $k$-acilindricity. 
So, let  $v_* \in J$  such that $d_{\mathcal T}(v_*,v)\le\frac{k}{2}$ for any $v\in J$; 
 observe that  $h^{p}\cdot v_*$ satisfies the same property with respect to the set $h^{p}(J)=\fix(h^{p}g^{\ell }h^{-p})\cap \ax(h)$.
\noindent Since $h$ acts by translation of $\tau(h)\ge 1$ on its axis, we have
\small
$$d_{\mathcal T}\left(\fix(h^{p}g^{\ell}h^{-p}),\fix(g^{\ell})\right) 
 \ge d_{\mathcal T}\left(v_*, h^{p}\cdot v_*\right)-\frac{k}{2}-\frac{k}{2}
\ge p\,\tau(h)-k
$$
\normalsize
  Since the action is $k$-acylindrical we conclude that, in this case,  condition (\ref{fixl}) is satisfied for all $\ell\in\Z^*$ if $p\ge k+1 $ (as $\tau(h)\ge 1$),  which proves part  (ii).

\noindent Assertion (i) follows by applying the above argument to $g = g_1$ and to $h = g_1g_2$, which is a hyperbolic element with   $\tau(h)\ge 2$, by Lemma \ref{fact}(i).

 \noindent To prove  (iii), we may assume that 
 $J=\ax(h_1)\cap \ax(h_2) \neq\varnothing$,
 otherwise  $h_1$ and $h_2$ have an evident ping-pong dynamics for every choice of base point $x_0 \in {\mathcal T}$, and they clearly generate a nonabelian, rank 2 free subgroup.\\
 If $d=\diam(J) \le 3k$,  then the elements $h_1^{q}$, $h_2^{q}$, for any $q\ge 3k+1$, generate a nonabelian, rank $2$ free subgroup by Lemma \ref{fact2}.  
 Assume now that $d\ge 3k+1$. \linebreak By the condition of $k$-acylindricity, we  infer that 
 $\max \{ \tau (h_1), \tau (h_2) \} > d/3 \;;$ otherwise, there  exists a connected subset $J' \subset J$, with 
 $\diam(J') = d/3  > k$, which is fixed by $h_1^{-1}h_2^{-1}h_1h_2$ 
 (actually, assume $J$ oriented by the translation direction of $h_1$: then,  it is enough to take $J'$ equal to the first subsegment of length $d/3$ of $J$,  if  $h_1, h_2$ translate $J$ in the same direction;   and $J'$ equal to the middle  subsegment of $J$ of length $d/3$,  when  $h_1,h_2$ translate in opposite directions).
So, we may assume  that $\tau(h_1)> d/3$: in this case, for $p\geq 3$ we have
$$ \ax(h_1^ph_2h_1^{-p}) \cap \ax (h_2) = (h_1^p .\ax(h_2)) \cap \ax (h_2) =\varnothing$$
hence $\{ h_2, h_1^ph_2h_1^{-p}\} $  generate a nonabelian, rank 2 free subgroup by Lemma \ref{fact}. 
The  case where $\tau(h_2)> d/3$ is analogous. The last assertion in (iii) is classical.
 \end{proof}


\section{Systolic estimates}\label{sectionsystolic}

\begin{defn}\label{EDMG}
Let $(G,d)$ be a discrete, proper metric group, \textit{i.e.} a discrete group $G$ endowed with a left-invariant distance such that the balls of finite radius are finite sets.
The \textit{entropy} of   $(G,d)$ is:
$$\Ent(G,d)= \limsup_{R\f\infty}\frac{1}{R}\log \, \# B_d (\id,R)$$
where $B_d(g, R) = \{g'\;|\; d(g,g')<R\}$ denotes the ball of radius $R$ centered at $g$.
\end{defn}

 \vspace{2mm}
We shall be mainly interested in two different kinds of distances on $G$:
\vspace{1mm}


\noindent \textit{-- word or word-weighted distances}, associated to  some  finite generating set $\Sigma$ and to some weigth function $\ell: \Sigma \rightarrow \R^+$,  denoted $d_\ell$;  this is the unique left-invariant length distance on the Cayley graph $\mathcal C(G,\Sigma)$ such that $d_{\ell}(\id,s)=\ell(s)$  and is linear on each edge (when $\ell=1$ this is the usual word metric $d_\Sigma$ associated with $\Sigma$);.
\vspace{1mm}

%
\noindent \textit{-- geometric distances,} associated to some discrete, free action of $G$ on a pointed, Riemannian manifold $(Y, y_0)$,  denoted $d_{y_0}$;  in this case  $d_{y_0}(g\,,g'\,)=  d(g.\,y_0,g'.\,y_0)$  is the distance between corresponding orbit points. 
\vspace{1mm}

\noindent We shall denote the corresponding distances from the identity by $|g|_\Sigma$, $|g|_\ell$, $|g|_{y_0}$.

%
%


\noindent The following  properties of the entropy are  well-known, and will be used later:

\begin{itemize}
\item[(E1)] When $Y=\widetilde{X}$ is the Riemannian universal covering of a Riemannian manifold $X$,  with $G \cong \pi_1(X)$ acting on $Y$ by deck transformations,  for any choice of $\tilde{x}_0 \in \tilde{X}$, the volume-entropy of $X$  satisfies  $\ent(X)\geq\Ent(G, d_{{\tilde x}_0})$,  with  equality  when $X$ is compact, cp. \cite{sam2}.
\item[(E2)] Given distances $d_1\leq d_2$ on $G$, we have: $\Ent(G,d_1)\geq\Ent(G,d_2).$ 
 
\end{itemize}

 \noindent The announced volume estimates of Theorem  \ref{app_MTHM_1} and Corollary \ref{app_MTHM_1v} are a particular case of the following result: 


\begin{thm}\label{MAINTH}
Let $X$ be any compact, connected Riemannian manifold with 
 \linebreak 
  torsionless 
 fundamental group, acting  non-elementarily and $k$-acylindrically  
  on a simplicial tree.  
If $\diam(X)\le D$\,, $\ent(X)\le E$, then: 
\begin{equation}\label{MAINest}
\sys\pi_1(X)\ge \frac{s_0 (E \cdot D)}{E}
\end{equation}
  where $\displaystyle s_0 (t)= \log\left(1+\frac{4}{e^{(4 k+ 10)\,t}-1}\right) $.
Moreover, if $X$ is $1$-essential  then: 
\begin{equation}\label{volume}
\Vol(X)\ge C_n\cdot \left( \frac{s_0 (E \cdot D)}{E} \right)^n
\end{equation}
\end{thm}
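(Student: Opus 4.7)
The plan is to argue by contradiction: assuming $\sys\pi_1(X)=s$ is small, I produce a rank $2$ free subgroup $F\le G=\pi_1(X)$ whose generators have controlled geometric displacements in $\widetilde X$, and then invoke (E1)--(E2) together with $\Ent(X)\le E$ to derive \eqref{MAINest}. Fix $\tilde x_0\in\widetilde X$ and denote by $d_0$ the geometric distance on $G$ defined by the orbit of $\tilde x_0$. Choose $g_1\in G\setminus\{1\}$ with $|g_1|_{d_0}=s$. Since $\diam(X)\le D$, every loop at $x_0$ decomposes by an $\varepsilon$-net argument along it into loops of $d_0$-length $\le 2D$, so $S_D:=\{g\in G:|g|_{d_0}\le 2D\}$ generates $G$. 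Non-elementarity of the action on $\mathcal T$ then forces the existence of some $g_2\in S_D$ such that the pair $(g_1,g_2)$ falls into a non-elementary configuration of Theorem~\ref{freesmgr}: otherwise every element of $S_D$ would stabilize $\fix(g_1)$ (resp. $\ax(g_1)$), forcing $G=\langle S_D\rangle$ itself to be elementary.

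Applying Theorem~\ref{freesmgr} to $(g_1,g_2)$ produces $u,v$ freely generating a rank $2$ free subgroup $F\le G$. In the principal cases one may take $u=g_1$, so $|u|_{d_0}\le s$, and $v$ of the form $h^pg_1h^{-p}$ with $h=g_1g_2$ and $p\le\lceil(k+1)/2\rceil$ (case (i)), or $g_2^pg_1g_2^{-p}$ with $p\le k+1$ (case (ii) and case (iii) with long axis overlap); a triangle-inequality bookkeeping then yields $|v|_{d_0}\le (4k+10)D+O(s)$. The residual subcase of Theorem~\ref{freesmgr}(iii) --- both elements hyperbolic with very small axis overlap, naively requiring powers of order $3k+1$ on both generators --- must be reduced to one of the previous configurations by replacing $g_2$ with a suitable conjugate $\gamma g_2\gamma^{-1}\in S_D$ whose axis meets $\ax(g_1)$ in a longer arc.

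Next, I estimate $\Ent(F,d_0|_F)$ via a weighted word count. For a free group of rank $2$ with generator weights $a=|u|_{d_0}$, $b=|v|_{d_0}$, a standard transfer-matrix computation over reduced words (states $\{u^{\pm 1},v^{\pm 1}\}$ with forbidden immediate cancellation) shows that the weighted-word entropy $\lambda$ is the positive root of
\[
e^{-\lambda a}+e^{-\lambda b}+3e^{-\lambda(a+b)}=1,\qquad\text{i.e.}\qquad (e^{\lambda a}-1)(e^{\lambda b}-1)=4.
\]
Since $d_0|_F$ is dominated by this weighted word metric on $F$, property (E2) gives $\Ent(F,d_0|_F)\ge\lambda$; combining with (E1) and $F\le G$ yields $E\ge\Ent(G,d_0)\ge\lambda$, whence $(e^{Ea}-1)(e^{Eb}-1)\ge 4$ by monotonicity of each factor in $\lambda$. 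Substituting $a\le s$ and $b\le (4k+10)D$ gives $e^{Es}\ge 1+4/(e^{(4k+10)ED}-1)$, which is exactly $Es\ge s_0(ED)$.

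The volume estimate \eqref{volume} follows immediately from Gromov's systolic inequality $\Vol(X)\ge C_n\sys\pi_1(X)^n$ for $1$-essential $n$-manifolds (\cite{gro_frm}, Theorem 0.1.A), combined with the systolic bound just proved. The main obstacle is the weight control on $v$: preserving the asymmetric structure $|u|\le s$, $|v|\le(4k+10)D$ --- essential for the precise $s_0$ form of the bound --- requires, in the small-overlap subcase of Theorem~\ref{freesmgr}(iii), a careful exploitation of $k$-acylindricity to replace $g_2$ by a conjugate without enlarging its displacement, which is where the real work lies.
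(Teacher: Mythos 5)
Your outline is sound for the elliptic case: the choice of $g_1$ realizing the systole, the generating set $S_D$ from a $D$-ball, the conjugation by a short hyperbolic element $h=g_1g_2$ to produce $g_2'=h^pg_1h^{-p}$ with $p\sim(k+1)/2$, the weighted transfer-matrix formula $(e^{\lambda a}-1)(e^{\lambda b}-1)=4$, and the comparison $\Ent(F,d_\ell)\le\Ent(G,d_{\tilde x_0})\le\ent(X)$ all match the paper, up to absorbing the $O(s)$ term into $(4k+10)D$ via $s\le 2D$.

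The gap is precisely where you point it out: the case $g_1$ hyperbolic. Your plan to reduce the small-overlap subcase of Theorem~\ref{freesmgr}(iii) ``by replacing $g_2$ with a suitable conjugate $\gamma g_2\gamma^{-1}\in S_D$ whose axis meets $\ax(g_1)$ in a longer arc'' is not substantiated and does not obviously work --- there is no reason a conjugator $\gamma$ achieving this should lie in $S_D$, nor that the displacement of the conjugate stays controlled. You are trying to force the hyperbolic case into the same asymmetric free-\emph{group} framework as the elliptic case, which is exactly the source of the difficulty. The paper avoids this entirely: when $g_1$ is hyperbolic it does \emph{not} produce a free group and does \emph{not} invoke the overlap dichotomy with its $h_i^q$, $q\ge 3k+1$. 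Instead it invokes only the last clause of Theorem~\ref{freesmgr}(iii) --- ``either $\{h_1,h_2\}$ or $\{h_1^{-1},h_2\}$ freely generate a rank $2$ free \emph{semigroup},'' which requires no acylindricity and no power-taking at all --- applied either to $g_1$ and a short generator $s$ (if $s$ is hyperbolic off-axis) or to $g_1$ and $sg_1s^{-1}$ (if $s$ is elliptic off-axis), giving $\ell_2\le 6D$. It then applies a different entropy lemma (BCG, Lemme 2.4) valid for free \emph{semigroups}, yielding $\sys\ge E^{-1}e^{-6DE}$, and checks that for $k\ge 1$ this bound dominates the elliptic-case bound $s_0(ED)/E$, so the weaker bound is the advertised one. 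The $k=0$ case (trivial edge stabilizers, hence free product) is handled separately via the sharper estimate of~\cite{Cer}. You would need to either reproduce that semigroup argument or genuinely justify your conjugation claim; as written, your proof does not close.

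Two smaller points. First, you never address the linear case of the action when excluding elementary configurations; the paper handles this through Lemma~\ref{gensys}(c), which shows a linear acylindrical action forces $G$ virtually cyclic --- ruled out by non-elementarity --- and in the hyperbolic case uses part (b) to find $s\notin N_G(\langle g_1\rangle)$. Second, your claim that a pair $(g_1,g_2)$ with $g_2\in S_D$ directly realizes one of the configurations of Theorem~\ref{freesmgr} is not quite what happens when $g_1$ is elliptic: there $g_2$ itself may fix a point in $\fix(g_1)$, and what one actually extracts from $S_D$ is a hyperbolic $h$ of $\Sigma$-length $\le 2$ (Lemma~\ref{gensys}(a)) before conjugating.
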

 
\noindent Recall that, following M. Gromov  \cite{gro_frm},  a $1$\textit{-essential} $n$-manifold $X$ is a closed, connected  $n$-manifold which admits a continuous map  into an aspherical space $f:X\f K$, such that the image of the fundamental class $[X] \in H_n(X, \Z)$ via the homomorphism induced in homology by $f$ does not vanish. 
\vspace{1mm}

In the proof of Theorem \ref{MAINTH}, we shall need the following, elementary:
\begin{lem}\label{gensys}
Let $G$ be any finitely generated group, acting 
without edge-inversions
on a simplicial tree $\mathcal T$, and let $\Sigma$ be any finite generating set for $G$:
\vspace{1mm}

\noindent (a)  if the action is non-elliptic, then there exists a hyperbolic element $h\in G$ such that $|h|_\Sigma\le 2$. Namely,  either  $h \in \Sigma$, or $h$ is the product of two elliptic elements $s_1, s_2\in\Sigma$  such that $\fix(s_1)\cap\fix(s_2)=\varnothing$; 
\vspace{1mm}

\noindent (b)  if the action is non-elementary, then for any hyperbolic element   $h\in G$ there exists  $s\in \Sigma$ which does not belong to the normalizer $N_G(h)$ of  $\langle h \rangle$ in $G$.
\vspace{1mm}

\noindent (c)   if the action is  linear  and acylindrical, then $G$ is virtually cyclic.

\end{lem}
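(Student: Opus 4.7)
For (a), the key tool is the Helly property of subtrees of a tree: any finite family of pairwise-intersecting subtrees of $\mathcal T$ has a common point. If some $s\in\Sigma$ is hyperbolic we are done with $h=s$. Otherwise every $s\in\Sigma$ is elliptic, with $\fix(s)$ a non-empty subtree. If the subtrees $\{\fix(s)\}_{s\in\Sigma}$ were pairwise intersecting, Helly's property would produce a common fixed point for all of $\Sigma$, hence for $G=\langle\Sigma\rangle$, contradicting the hypothesis that the action is non-elliptic. So some pair $s_1,s_2\in\Sigma$ satisfies $\fix(s_1)\cap\fix(s_2)=\varnothing$, and Lemma~\ref{fact}(i) yields that $h=s_1s_2$ is hyperbolic.

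For (b), I argue by contradiction: if every $s\in\Sigma$ lay in $N_G(\langle h\rangle)$, then $\langle h\rangle$ would be normal in $G$, so each $g\in G$ would satisfy $ghg^{-1}=h^n$ for some $n\in\Z^*$. Comparing translation lengths gives $\tau(h)=|n|\tau(h)$, whence $n=\pm 1$, and therefore $g\cdot\ax(h)=\ax(ghg^{-1})=\ax(h^{\pm 1})=\ax(h)$. Consequently $\ax(h)$ is $G$-invariant, and $\mathcal T_G\subseteq\ax(h)$ forces $\mathcal T_G$ to be either a point or a line; that is, the action is elementary---contradicting the hypothesis.

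For (c), the linearity assumption means $\mathcal T_G$ is a bi-infinite simplicial line (if it were a bounded segment, its midpoint would be $G$-fixed and the action elliptic). The restriction map $\rho\colon G\to\operatorname{Aut}(\mathcal T_G)$ therefore takes values in the group of simplicial automorphisms of the line that do not invert any edge, namely $\Z\rtimes\Z/2\Z$ (integer translations together with reflections about vertices). A non-trivial element of $\ker\rho$ would fix $\mathcal T_G$ pointwise, hence be elliptic with $\fix$ of infinite diameter, violating $k$-acylindricity; so $\rho$ is injective and $G$ embeds in the virtually cyclic group $\Z\rtimes\Z/2\Z$. The only mildly delicate point in the whole lemma is this identification of $\operatorname{Aut}(\mathcal T_G)$: the hypothesis ``without edge inversions'' permits reflections about vertices but not about edge midpoints, and this distinction is what makes $\Z\rtimes\Z/2\Z$ the correct target.
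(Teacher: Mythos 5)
Your parts (a) and (b) follow the paper's own strategy: (a) invokes the Helly property of subtrees to extract a pair of generators with disjoint fixed-point sets (the paper asserts this step tacitly, citing only that $G$ has no global fixed point, but the Helly argument is exactly the intended justification), and (b) reduces normalization of $\langle h\rangle$ to preservation of $\ax(h)$ via translation lengths, just as the paper does (the paper states the equivalence ``$s\in N_G(\langle h\rangle)\Leftrightarrow s$ preserves $\ax(h)$'' without proof; you supply the forward implication, which is the one used). Your part (c) takes a slightly different route: rather than the paper's direct structural analysis of the generators (showing $G=\langle h\rangle$ or $G=\langle h,s\rangle\cong\Z\rtimes\Z_2$), you embed $G$ into the automorphism group of the invariant line via the restriction map, killing the kernel by acylindricity. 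That is a clean and valid reduction, and the conclusion is correct since $\operatorname{Aut}$ of the bi-infinite line is the infinite dihedral group $D_\infty\cong\Z\rtimes\Z/2\Z$, which is virtually cyclic.

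However, the concluding remark of your part (c) is mistaken and should be dropped. The set of simplicial automorphisms of $\Z$ that do not invert an edge --- translations $x\mapsto x+c$ together with vertex reflections $x\mapsto 2v-x$ --- is \emph{not} a subgroup of $D_\infty$: composing a translation by an odd integer with a vertex reflection produces a reflection about an edge midpoint, i.e.\ an edge-inverting map. So ``the group of non-inverting automorphisms'' is not a well-defined object, and in particular it is not $\Z\rtimes\Z/2\Z$. What is true (and what your argument actually needs) is simply that $\rho(G)$ is a \emph{subgroup} of $D_\infty$, and $D_\infty\cong\Z\rtimes\Z/2\Z$ is virtually cyclic already with edge inversions allowed; the no-inversion hypothesis plays no role in identifying the isomorphism type of the target. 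Where the no-inversion hypothesis genuinely matters in this lemma is elsewhere --- e.g.\ it guarantees the elliptic/hyperbolic dichotomy used throughout, and in the paper's version of (c) it forces the reflection generator $s$ to satisfy $s^2=1$ after an application of acylindricity. Your proof remains correct once the erroneous characterization and the ``delicate point'' comment are removed; just replace them with ``$\rho(G)$ is a subgroup of $\operatorname{Aut}(\mathcal T_G)\cong D_\infty$, which is virtually cyclic.''
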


\begin{proof}[Proof of Lemma \ref{gensys}] Let us show (a).
If $s\in \Sigma$ is a hyperbolic element, we choose $h=s$. On the other hand, if   $\Sigma$ only contains  elliptic elements,  there exists a pair of elements $s_1, s_2$, from $\Sigma$,  such that $\fix(s_1)\cap \fix(s_2)=\varnothing$, because $G$ acts on  $\mathcal T$ without global fixed points. 
Then,  $h=s_1s_2$ is a hyperbolic element with $|h|_\Sigma\le 2$.\\
 Let us now prove (b). Let  $h$ be  a hyperbolic element of $G$;  an element $s\in\Sigma$ belongs to $N_G(\langle h\rangle)$ if and only if it globally preserves $\ax(h)$. Therefore, if $s\in N_G(\langle h\rangle)$ for all $s\in\Sigma$, we would deduce that $G=N_G(\langle h\rangle)$ preserves a line, and thus the action is elementary, a contradiction.
For (c),  assume that $G$ preserves a line of  $\mathcal T$; this is the axis of some hyperbolic element $h$ with minimal displacement,  by (a). Any other element $s \in \Sigma$ either  is a hyperbolic element such that $\ax(s)=\ax(h)$, or is elliptic and  globally preserves $\ax(h)$, swapping the two ends. In the first case  $s$ is a power of $h$, by acylindricity. In the second case, $s$ acts on $\ax(h)$ as a reflection with respect to some vertex, and $s^2$ fixes pointwise  the  axis;  hence, again  by acylindricity,  $s^2=1$ and  $shs^{-1}=h^{-1}$. Also, if $s' \in \Sigma$ is another  elliptic element,   $ss'$ fixes the ends of  $\ax(h)$, hence it is again a power of $h$. It follows that  $G = \langle h\rangle \cong \mathbb{Z} $ or $G = \langle h,s\rangle   \cong \mathbb{Z} \rtimes \mathbb{Z}_2$.
\end{proof}

\begin{proof}[Proof of Theorem \ref{MAINTH}] 
 The volume estimate (\ref{volume}) follows from (\ref{MAINest}) just by applying   Gromov's  Systolic inequality  $\Vol(X)\ge C_n\cdot \left(\sys\pi_1(X)\right)^n$, which holds  for any  $1$-essential $n$-manifold, for  a universal constant $C_n$ only depending  on the dimension \nolinebreak $n$ (see \cite{gro_frm}, Thm. 0.1.A).
  To show  (\ref{MAINest}), let   $\gamma_1$ be a shortest non-nullhomotopic closed geodesic  realizing the systole of $X$,  let $x_0  \in \gamma_1$ and let $g_1$ be the class of $\gamma_1$ in $\pi_1(X, x_0)$. 
  Consider the natural action by deck transformations of $G=\pi_1(X, x_0)$  on the Riemannian universal covering  $\tilde X$, and the \textit{displacement function} of $G$ on $\tilde X$ 
 
 $$ \Delta_{G}(\tilde x)  := \inf_{g\in G^\ast} d(\tilde x, g. \tilde x)$$
 
\noindent whose infimum over $\tilde X$  coincides with   $\sys \, \pi_1(X)$, and is realized by $g_1$ at any preimage $\tilde x_0\in\tilde X$ of $x_0$.
\noindent Then, consider the finite generating set of $G$ given by     (cp.   \cite{Gromov})  
 $$\Sigma =\{g\in G\,|\,  d(\tilde x_0 ,g.\tilde x_0 )\le2D\}.$$ 
 We shall  consider separately the cases where $g_1$ is  elliptic or  hyperbolic. \\
  If $g_1$ is elliptic, we know by Lemma \ref{gensys} (a)  that   there exists  a hyperbolic element $h$ with $|h|_\Sigma \leq 2$. Setting $g_2=h^pg_1h^{-p}$, for the least integer $p \geq (k+1)/2$, the elements $\{g_1,g_2\}$ generate a nonabelian free subgroup, by Theorem \ref{freesmgr} (ii).  \\
  We now use the following Lemma, which is folklore (see for instance  \cite{Cer}):

\begin{lem*}
Let $\mathbb{F}_2$ be a  free nonabelian group, freely generated by $\Sigma=\{g_1,g_2\}$.\linebreak
  For any   word-weighted distance   $d_\ell$  on the Cayley graph $\mathcal C(\mathbb{F}_2,\Sigma)$, defined by the conditions $|g_1|_\ell=\ell_1$ and $|g_2|_\ell=\ell_2$, the entropy $\mathcal E=\Ent(\mathbb{F}_2,d_\ell)$ solves the equation: 
\begin{equation}\label{cer_ent}
(e^{\mathcal E\cdot \ell_1}-1)(e^{\mathcal E\cdot \ell_2}-1)=4
\end{equation}
\end{lem*}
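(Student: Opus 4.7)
The plan is to compute the Poincar\'e series $P(s) = \sum_{g \in \mathbb{F}_2} e^{-s|g|_\ell}$ and identify the claimed equation with the vanishing of its denominator. Indeed, for a discrete proper metric group it is standard that the entropy $\Ent(\mathbb{F}_2, d_\ell)$ equals the critical exponent $\inf\{\,s > 0 \mid P(s) < \infty\,\}$ of $P$ (an Abel-summation argument starting from Definition \ref{EDMG}), so I only need to locate the unique positive pole of $P$.

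Since $\mathbb{F}_2$ is free on $\Sigma = \{g_1, g_2\}$, every element has a unique reduced word over $\Sigma \cup \Sigma^{-1}$ and its weighted length $|g|_\ell$ is just the sum of the weights of its letters ($\ell_1$ for each $g_1^{\pm 1}$, $\ell_2$ for each $g_2^{\pm 1}$). I set $x = e^{-s \ell_1}$, $y = e^{-s \ell_2}$, and denote by $a(s)$ (resp.\ $b(s)$) the partial sum of $e^{-s|w|_\ell}$ over reduced words $w$ ending in $g_1$ (resp.\ in $g_2$); by the involution $g_i \leftrightarrow g_i^{-1}$ the sums over $g_1^{-1}$- and $g_2^{-1}$-endings coincide with $a$ and $b$, so $P(s) = 1 + 2a + 2b$. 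Every non-empty reduced word is obtained uniquely by appending one letter to a reduced word that does not end in its inverse; partitioning by the final letter yields the $2 \times 2$ linear system
\begin{equation*}
a \,=\, x(1 + a + 2b), \qquad b \,=\, y(1 + b + 2a).
\end{equation*}

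Cramer's rule then gives $a = x(1+y)/\Delta$ and $b = y(1+x)/\Delta$ with $\Delta = (1-x)(1-y) - 4xy = 1 - x - y - 3xy$, whence $P(s) = (1+x)(1+y)/\Delta$. Viewed as a function of $s \in (0,+\infty)$, the denominator $\Delta$ is strictly increasing (its derivative is $\ell_1 x(1 + 3y) + \ell_2 y(1 + 3x) > 0$), passing from the value $-4$ at $s = 0^+$ to the value $1$ as $s \to +\infty$; it therefore vanishes at a unique positive value, which must be the critical exponent $\mathcal E$. Multiplying $1 - x - y - 3xy = 0$ through by $e^{\mathcal E(\ell_1 + \ell_2)}$ and factoring the resulting expression $e^{\mathcal E(\ell_1 + \ell_2)} - e^{\mathcal E \ell_1} - e^{\mathcal E \ell_2} + 1$ yields exactly $(e^{\mathcal E \ell_1} - 1)(e^{\mathcal E \ell_2} - 1) = 4$. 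The only point that deserves real care is checking that the recursion above enumerates each non-empty reduced word exactly once; once this combinatorial bookkeeping is settled the remainder is elementary algebra, so I do not anticipate any deeper obstacle.
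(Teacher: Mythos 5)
Your argument is correct, and the computation is clean: the recursion $a=x(1+a+2b)$, $b=y(1+b+2a)$ does enumerate each non-empty reduced word exactly once (a reduced word ending in $g_1$ is precisely $u g_1$ with $u$ reduced and not ending in $g_1^{-1}$, and by the length-preserving automorphism $g_i\mapsto g_i^{-1}$ the four ``end-letter'' subsums collapse to the two quantities $a,b$). The resulting rational form $P(s)=(1+x)(1+y)/(1-x-y-3xy)$, the strict monotonicity of the denominator, and the rewriting $1-x-y-3xy=0 \Leftrightarrow (e^{\mathcal E\ell_1}-1)(e^{\mathcal E\ell_2}-1)=4$ all check out. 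Note, however, that the paper itself offers no proof of this lemma --- it is stated as folklore with a pointer to the reference [Cer1] (Cerocchi, \emph{Margulis Lemma, Entropy and Free Products}), where a computation of exactly this type appears --- so there is no internal proof to compare against; your Poincar\'e-series argument is the standard route to this identity. The only point you gloss over slightly (and flag yourself) is the equality between $\Ent(\mathbb F_2,d_\ell)$ and the abscissa of convergence of $P(s)$, together with the fact that divergence of $P$ for $s$ below the zero of $\Delta$ follows from the positivity of the coefficients (if $a,b<\infty$ then necessarily $\Delta>0$, since $a\Delta=x(1+y)>0$); both are routine but worth a sentence in a complete write-up.
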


\pagebreak
 Applying this lemma to $\mathbb{F}_2 \cong \langle g_1, g_2\rangle$, endowed with the word-weighted  distance $d_{\ell}$ defined by  $\ell_1 := | g_1|_{\tilde x_0} = \sys \, \pi_1(X) $ and $\ell_2:=  |g_2|_{\tilde x_0} \le (4k+10)D$, we derive from   equation (\ref{cer_ent}) : 
 
\begin{equation}\label{minorsys}
 \ell_1 \ge\frac{1}{\mathcal E}\,\cdot\,\log\left(1+\frac{4}{e^{\ell_2  \cdot D \mathcal E}-1}\right)\ge\frac{1}{ E}\,\cdot\,\log\left(1+\frac{4}{e^{(4k+10)  \cdot D  E}-1}\right)
\end{equation}
 
since $d_{\tilde x_0} \le d_\ell$ and so,  by  (E1) and (E2),
$$ \mathcal E= \Ent(\langle g_1, g_2\rangle, d_{\ell}) 
     \le \Ent(\langle g_1, g_2\rangle, d_{\tilde x_0})
     \le \Ent(G, d_{\tilde x_0})
     \le \Ent(X) = E$$
 
\noindent This concludes the proof in the case where $g_1$ is elliptic.\\
Assume  now that $g_1$ is a hyperbolic element. By Lemma \ref{gensys}, we can pick an element $s \in \Sigma$ which is not in $N_G(g_1)$. By  the discussion in Lemma \ref{gensys},  either $s$ is hyperbolic with $\ax(s)\neq \ax(g_1)$, or $s$ is elliptic and does not preserve $\ax(g_1)$. \linebreak
In the first case, we deduce by Theorem \ref{freesmgr} (iii)  that $\{ g_1, g_2\}$ generate a  free nonabelian semigroup of rank 2, for some choice of $g_2 \in \{ s, s^{-1} \}$.   In the second case, $g_2:=sg_1s^{-1}$ is a hyperbolic element with $\ax(g_2)\neq \ax(g_1)$ and, by the same theorem,   $\{ g_1, g_2\}$ generate a free nonabelian semigroup. \\
\noindent We can now use the following (see \cite{BCG}, Lemme 2.4):
\begin{lem} \label{stima}
Let $\mathbb{F}^+_2$ be a nonabelian  semigroup, freely generated by $\Sigma=\{g_1,g_2\}$.
For any  left invariant distance $d$ on $\mathbb{F}^+_2$ and  any choice of  of positive real numbers $(\ell_1,\ell_2)$ such that $|g_1|_d \le \ell_1$ and $|g_2|_d \le \ell_2$,   the entropy $\mathcal E=\Ent(\mathbb{F}^+_2,d)$ satisfies the inequality:
$$\mathcal E=\ent(\mathbb{F}^+_2,d)\ge\sup_{a\in(0+\infty)}\left(\frac{1}{\ell_1+a\ell_2}\right)\cdot ((1+a)\cdot\log(1+a)-a\log(a))$$
 
\end{lem}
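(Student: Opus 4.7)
The plan is to lower-bound the growth of $\mathbb{F}_2^+$ with respect to $d$ by counting freely reduced words of prescribed composition in the two generators, and then to apply Stirling's formula to optimize over the ratio between their occurrences.

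First I would observe that, by left-invariance of $d$ and iterated use of the triangle inequality, any word $w=g_{i_1}\cdots g_{i_n}\in\mathbb{F}_2^+$ containing $n_1$ copies of $g_1$ and $n_2$ copies of $g_2$ satisfies $|w|_d\le n_1\ell_1+n_2\ell_2$. Since $\mathbb{F}_2^+$ is freely generated by $\Sigma$, distinct such words represent distinct elements, so for every $R>0$
\[
\#\{g\in\mathbb{F}_2^+:|g|_d\le R\}\;\ge\;\sum_{\substack{(n_1,n_2)\in\N^2\\ n_1\ell_1+n_2\ell_2\le R}}\binom{n_1+n_2}{n_1}.
\]

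Next I would fix $a\in(0,+\infty)$ and, for each $m\in\N$, keep only the contribution of the composition $(n_1,n_2)=(m,\lfloor am\rfloor)$. All the $\binom{m+\lfloor am\rfloor}{m}$ corresponding words lie in the ball of radius $R_m:=m\ell_1+\lfloor am\rfloor\ell_2\le m(\ell_1+a\ell_2)$. Stirling's formula then yields
\[
\log\binom{m+\lfloor am\rfloor}{m}\;=\;m\bigl[(1+a)\log(1+a)-a\log a\bigr]\,+\,O(\log m).
\]
Dividing by $R_m$ and passing to the limsup in $m$, one obtains
\[
\mathcal E\;\ge\;\frac{(1+a)\log(1+a)-a\log a}{\ell_1+a\ell_2},
\]
and taking the supremum over $a>0$ concludes the proof.

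There is no serious obstacle: the argument is entirely a counting and Stirling estimate. The only care required is to verify that the error terms arising from replacing $am$ by $\lfloor am\rfloor$ and from the Stirling approximation are both $O(\log m)$, hence asymptotically negligible against the main term of order $m$; this is routine. Note also that it is essential here that $\mathbb{F}_2^+$ is the \emph{free} semigroup, so that every one of the $\binom{n_1+n_2}{n_1}$ interleavings of a fixed multiset of letters is a genuinely distinct element --- precisely the place where the non-commutativity in the hypothesis is used.
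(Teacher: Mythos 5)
Your proof is correct. The paper does not actually supply its own proof of this lemma: it simply states it with a citation to [BCG], Lemme~2.4 (Besson--Courtois--Gallot, \emph{Un Lemme de Margulis sans courbure et ses applications}). Your counting-plus-Stirling argument is the standard derivation of this bound, and the computation checks out: writing $n_2=\lfloor am\rfloor=am+O(1)$ and using $\log k!=k\log k-k+O(\log k)$ gives
\[
\log\binom{m+n_2}{m}=(m+n_2)\log(m+n_2)-m\log m-n_2\log n_2+O(\log m)=m\bigl[(1+a)\log(1+a)-a\log a\bigr]+O(\log m),
\]
while $R_m=m\ell_1+n_2\ell_2\le m(\ell_1+a\ell_2)$, so the ratio tends to the claimed value along the subsequence $R_m\to\infty$, and $\limsup_{R\to\infty}$ is at least this sequential limit. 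The one point worth being explicit about (you flag it correctly) is that freeness of the semigroup is what guarantees that the $\binom{n_1+n_2}{n_1}$ interleavings are pairwise distinct elements; also, since the paper's ball $B_d(\id,R)$ is open, one should evaluate at $R=R_m+\varepsilon$, which changes nothing in the limit. In short: correct, self-contained, and essentially the argument one expects behind the cited lemma.
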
 

\noindent 
We apply this lemma to  $\mathbb{F}^+_2 \cong \langle g_1, g_2\rangle$, 
for    $\ell_1 := |g_1|_{\tilde x_0}$ and $\ell_2:= |g_2|_{\tilde x_0}\le 6D$,
and we derive, by choosing  $a= E\cdot \ell_1 $
\begin{equation}\label{minorsys2}
 \ell_1 \ge \frac{1}{E}\,\cdot\, e^{-6D E}
\end{equation}
since  $\log(1+a)\ge\frac{a}{1+a}$ and   $\mathcal E\le E$. 
\vspace{5mm}

\noindent If $k \geq 1$, this lower bound for the systole is greater than the one in (\ref{minorsys}) 
(actually,  the inequality  $e^{-6x}<\log\left(1+\frac{4}{e^{(4k+10)x}-1}\right)$ implies that $x \leq \frac{21}{125}$, and in this case $2x<e^{-6x}$; but if $x=ED \leq \frac{21}{125}$ then $\ell_1 \cdot E \leq 2DE < e^{-6DE}$, contradicting  (\ref{minorsys2})). 
On the other hand, if $k=0$ the stabilizers of the edges of $\mathcal T$ are trivial and thus $G$ splits as a free product of a finite number of finitely generated, torsionless groups. By \cite{Cer}, Theorem 1.3, the following  estimate 
 for the systole of finitely generated,  torsionless free products holds:
 
$$\sys\pi_1(X)\ge\frac{1}{E}\cdot\log\left(1+\frac{4}{e^{2D\,E}-1}\right)$$
which is sharper than (\ref{minorsys}) and concludes the proof of Theorem \ref{MAINTH}.
\end{proof}

\pagebreak

\section{Applications to  $3$-manifolds}\label{sectiondim3}

 \noindent The section is devoted to the proof of Theorems \ref{app_MTHM_1}\&\ref{rigidity}, and of their corollaries.\\
In \S4.1,  we recall some basic results of $3$-dimensional topology (the Prime Decomposition and the JSJ-decomposition) and  prove that given a compact $3$-manifold $X$ without spherical boundary components, either $int(X)$ admits a geometric metric, or $\pi_1(X)$ has a splitting  as a free or amalgamated product  which is $4$-acylindrical.  \\
In \S4.2,    as a consequence of this dicothomy and of Theorem \ref{MAINTH}, we shall obtain the systolic and volume estimates (Theorem \ref{app_MTHM_1} and Corollary \ref{app_MTHM_1v}), and  we shall prove the rigidity results (Theorem \ref{rigidity} and Corollaries \ref{topfiniteness}, \ref{diffrigidity} \& \ref{fnt_glo}). 

\subsection{Acylindrical splittings of   non-geometric, $3$-manifolds groups} ${}$\\
For a comprehensive exposition of the topics that we recall here,  we refer to the classical books of Hempel and Thurston (\cite{hempel}, \cite{Thurst}), to the survey papers of Scott and Bonahon (\cite{Scott}, \cite{Bon}) and to the recent monography of Aschenbrenner, Friedl and Wilton (\cite{AFW}). 
\vspace{1mm}

We recall that a compact $3$-manifold $X$ is said to be \textit{prime} if it cannot be decomposed  non trivially as the connected sum of two manifolds, \textit{i.e.} when $X=X_1\# X_2$ then either $X_1$ or $X_2$ is diffeomorphic to $\mathbb{S}^3$.   A compact $3$-manifold $X$ is called \textit{irreducible} if every embedded $2$-sphere in $X$ bounds a $3$-ball in $X$ (and {\em reducible}  otherwise). Every orientable,  irreducible  $3$-manifold is  prime; conversely, if $X$ is an orientable, prime  $3$-manifold  with no spherical boundary components, then either $X$ is irreducible, or $X=\mathbb{S}^1\times \mathbb{S}^2$ (see \cite{hempel}, Lemma 3.13).  
Notice that an irreducible, orientable, compact $3$-manifold 
does not have boundary  components homeomorphic to the $2$-sphere, unless the manifold is the $3$-ball. \\
As we  deal also with compact $3$-manifolds $X$ with possibly non-empty  boundary we need a few more definitions: 
an embedded surface $S\subset X$ is said to be {\em incompressible} if for any embedded disk $D\subset X$  with  $\partial D\subset S$ there exists a disk $D'\subset S$ such that $\partial D'=\partial D$; when $X$ is irreducible, this implies that the disk $D$ is isotopic to $D'$. 
In particular,  $X$ has \textit{incompressible boundary} if any connected component of $\partial X$ is an incompressible surface.
Finally, a {\em $\partial$-parallel} properly embedded surface of $X$ is an embedded surface $S$ whose (possibly empty) boundary is contained in $\partial X$ and such that $S$ is isotopic $\mathrm{rel}\, \partial X$ to a subsurface in $\partial X$.


 A cornerstone of $3$-dimensional topology is the 

\begin{PdT}
	Let $X$ be any compact, oriented $3$-manifold.
	There exist oriented, prime, compact $3$-manifolds $X_0$, $X_1$,..., $X_m$ such that $X_0$ is diffeomorphic  to a sphere minus a finite collection of disjoint $3$-balls, $X_i$ has no spherical boundary components for $i \geq 1$, and $X = X_0\#X_1\#\cdots \#X_m$. \\
	Moreover,   if  $X_i'$, for $i=0,...,m'$, are manifolds with the same properties as the $X_i$'s, and 
	 $X = X_0\#X_1\#\cdots\# X_{m} =  X_0'\#X_1'\#\cdots\# X_{m'}'$ 
	  then $m=m'$ and (possibly after reordering the indices) there exist orientation-preserving diffeomorphisms $X_i \! \stackrel{\sim}{\f}  \! X_{i}'$.
	  The manifolds $X_i$ are   the {\em prime pieces} of $X$.  
\end{PdT}

The Prime decomposition Theorem has a partial converse, the Kneser's conjecture. In  classical references, the conjecture is stated for closed $3$-manifolds
or compact $3$-manifolds with incompressible boundary; actually, the conjecture  
is false in presence of compressible boundary, exceptly in case where the compressible boundary components are tori:

\begin{kneser}\label{Kneser}
	Let $X$ be any compact $3$-manifold whose  compressible boundary components (if any) are homeomorphic to tori.  
	If  $\pi_1(X) = G_1\!*\! \cdots \!*\!G_n$, \linebreak then there exist  compact $3$-manifolds  $X_1$,..., $X_n$, such that $\pi_1(X_i)= G_i$ and \linebreak $X= X_1\# \cdots  \# X_n$.
\end{kneser}

For compact irreducible $3$-manifolds there exists a second important decomposition theorem, due to   the independent work of Jaco-Shalen (\cite{jacosh}) and Johannson (\cite{jo1}, \cite{jo2}): this decomposition is obtained by cutting along embedded incompressible tori, which split the manifold into elementary pieces which are of two different (but not mutually exclusive) kinds: atoroidal  pieces and Seifert fibered pieces. We recall that a compact, irreducible $3$-manifold $X$ is said to be \textit{atoroidal} if any incompressible torus is  $\partial$-parallel. A compact, irreducible $3$-manifold is said to be a {\em Seifert fibered manifold} if it admits a decomposition into disjoint simple closed curves (the fibers of the Seifert fibration) such that each fiber has a tubular neighborhood which is isomorphic, as a circle bundle, to a \textit{standard fibered torus}\footnote{A pair of integers $(a,b)\in\N^*\times\Z$ being given, the associated standard fibered torus $\mathbb{T}_{a,b}$  is the circle bundle over the  disk $D^2$ obtained from $D^2 \times [0,1]$ by identifying the boundaries $D^2 \times \{0\}$ with $D^2 \times \{1\}$ via the automorphism  $\varphi: D^2 \rightarrow D^2$ given by the rotation by an angle of $\frac{2\pi\,b}{a}$; this manifold comes naturally equipped with a fibering by circles, given by gluing the ``parallels'' $\{p\} \times [0,1]$ of $\mathbb{T}_{a,b}$ via $\varphi$.    
 }.

\begin{JSJdT}
	Let $X$ be a compact, orientable, irreducible $3$-manifold. There exists a (possibly empty) collection of disjointly embedded incompressible tori $T_1,\dots, T_m$ such that each component of $X\setminus\bigcup_{1}^mT_i$ is atoroidal or Seifert fibered. A collection of tori with this property and having  minimal cardinality is unique up to isotopy. 
\end{JSJdT}

\noindent We shall refer  to  the minimal collection of tori   $\{T_1,\dots, T_m\}$  as to the \textit{\JSJ-tori of $X$}, and   to the connected components of $X$ cut along $\bigcup_{i=1}^mT_i$ as to the \textit{\JSJ-components of $X$};  the $JSJ$-decomposition is said {\em trivial} when the collection of JSJ-tori is empty.\\
As we remarked, Seifert fibered $3$-manifolds can be atoroidal: the list of atoroidal Seifert fibered $3$-manifolds can be found in Jaco-Shalen (\cite{jacosh}, IV.2.5, IV.2.6).
Following Thurston \cite{thur82} we say that an irreducible $3$-manifold $X$ is \textit{homotopically atoroidal} if every $\pi_1$-injective map from the torus to $X$ is homotopic to a map into the boundary; using Jaco-Shalen terminology this means that a manifold $X$ does not admit a non-degenerate map $f:T^2\f X$. Being homotopically atoroidal is a stronger property than just being atoroidal (as one allows continuous maps which are not embeddings);  however, the two notions coincide outside of Seifert fibered manifolds. The list of compact, homotopically atoroidal, orientable Seifert fibered manifolds is the following: Seifert fibered manifolds with finite fundamental group, $S^2\times S^1$, $D^2\times S^1$, $T^2\times I$ and the twisted, orientable interval bundle over the Klein bottle $K\widetilde \times I$; we observe that  only the last three have non-empty boundary. 
\vspace{1mm}

Following again \cite{thur82}, we define:

\begin{defn}\label{nnGEO}
	Let $X$ be a compact  $3$-manifold with (possibly empty) boundary.
	  We say that $X$ is {\em non-geometric} if its interior cannot be endowed with a complete metric which is locally isometric to one of the eight model geometries.
\end{defn}

\pagebreak

The geometrization of closed, orientable Seifert fibered $3$-manifolds $S$ is explained  in \cite{Scott}; on the other hand, the geometrization  of Seifert fibered manifolds with  boundary can be found in  \cite{Bon}  (where the  geometrization is meant with totally geodesic boundary;  the geometrization in Thurston's sense, i.e. with complete, geometric metrics,  is obtained   from a Fuchsian representation of the orbifold fundamental group of the base space  with parabolic  boundary generators,  and then extending it to a representation of $\pi_1(S)$ in $\mathrm{Isom}_+(\mathbb H^2\times \R)$, as explained in \cite{ohshika}). For the remaining three Seifert fibered manifold, the interior of $K\widetilde\times I$, $D^2\times I$ and $T^2\times I$ can be endowed with complete euclidean metrics.
For the remaining three Seifert fibered manifold, the interior of $K\widetilde\times I$, $D^2\times I$ and $T^2\times I$ can be endowed with complete euclidean metrics.\\ 
For what concerns the atoroidal pieces, 
Thurston's Hyperbolization Theorem
\footnote{Thurston announced for the first time in 1977 his Hyperbolization Theorem, and in 1982  the Geometrization Conjecture  \cite{thur82}; in the series of papers \cite{thur86a}, \cite{thur86b}, \cite{thur86c} (the latter two of which unpublished) Thurston filled some of the major gaps. Complete proofs can be found in \cite{otal96}, \cite{otal98}, \cite{kap}.}
asserts that a closed, Haken $3$-manifold admits a complete hyperbolic metric if and only if it is homotopically atoroidal, and that the interior of a compact, irreducible $3$-manifold with non-empty boundary can be endowed with a complete hyperbolic metric if and only if it is homotopically atoroidal and not homeomorphic to $K\widetilde{\times}I$. \\
On the other hand, the fact that closed, irreducible, homotopically atoroidal {\em  non-Haken} $3$-manifold admit a geometric metric is the content of Thurston's Geometrization Conjecture, proved by Perelman (\cite{per1}, \cite{per2}, \cite{per3}). In particular, the Elliptization Theorem shows that closed $3$-manifolds with finite fundamental group are finite quotients of $S^3$ (and thus Seifert fibered), and the Hyperbolization Theorem for the non-Haken case shows that irreducible, homotopically atoroidal, non-Haken $3$-manifolds carry complete hyperbolic metrics (for more references and further readings see \cite{AFW},  Ch.1, \S7). 
\vspace{1mm}

In view of this discussion, and for future reference, we record the following, now  well-established



\begin{fatto}\label{geometrization} $\!\!\!\!$
	A compact,  irreducible $3$-manifold 
with trivial JSJ-decomposition  is geometric.
\end{fatto}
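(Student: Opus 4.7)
The plan is to unfold the dichotomy forced by the JSJ hypothesis and then invoke, case by case, the geometrization results recalled in this subsection. The first step is to observe that if the JSJ-decomposition of $X$ is trivial, then $X$ is itself a single JSJ-piece, which by the JSJ-decomposition Theorem is either \emph{atoroidal} or \emph{Seifert fibered}. The proof then splits along these two (non mutually exclusive) possibilities.

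For the Seifert fibered case I would appeal directly to the classical geometrization of Seifert fibered $3$-manifolds. In the closed case, Scott's survey \cite{Scott} endows $X$ with a complete locally homogeneous metric modelled on one of the six relevant Thurston geometries. In the bounded case, Bonahon \cite{Bon} together with the Fuchsian representation argument indicated in the text (extended to $\mathrm{Isom}_+(\mathbb H^2\times\R)$, cf.\ \cite{ohshika}) covers all Seifert pieces whose base orbifold is hyperbolic, while the three remaining compact Seifert fibered manifolds $D^2\times\mathbb S^1$, $T^2\times I$ and $K\widetilde\times I$ admit complete Euclidean metrics on their interiors, as already recalled.

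For the atoroidal case, the first move is to upgrade ``atoroidal'' to ``homotopically atoroidal'': the two notions coincide outside the Seifert fibered class, so if $X$ were atoroidal but not homotopically atoroidal then $X$ would already be Seifert fibered and handled by the previous paragraph. Once $X$ is homotopically atoroidal, a further split according to $\pi_1(X)$ and to the Haken/non-Haken alternative closes the argument: if $\pi_1(X)$ is finite, Perelman's Elliptization Theorem identifies $X$ with a finite quotient of $\mathbb S^3$, hence spherical; if $\pi_1(X)$ is infinite and $X$ is Haken (closed, or compact with non-empty boundary but not $K\widetilde\times I$, which is Seifert anyway), Thurston's Hyperbolization Theorem endows $\mathrm{int}(X)$ with a complete hyperbolic metric; and the closed, non-Haken case is covered by Perelman's proof of the Geometrization Conjecture.

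The main difficulty is not any individual verification --- each case is the direct content of one of the theorems already quoted in \S4.1 --- but rather the bookkeeping needed to ensure that the short list of atoroidal Seifert fibered manifolds (those appearing in Jaco--Shalen IV.2.5--IV.2.6 together with the three bounded Seifert pieces above) is routed to the Seifert case rather than to Hyperbolization, where the conclusion would fail. Once this partition of possibilities is in place, the Fact is an immediate compilation of the Scott, Bonahon, Thurston and Perelman results.
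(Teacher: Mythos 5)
Your proof is correct and follows the same route as the paper: the Fact is recorded there as an ``immediate compilation'' of the preceding discussion (Scott/Bonahon/Ohshika for the Seifert fibered case, Thurston's Hyperbolization for Haken homotopically atoroidal manifolds, Perelman's Elliptization and Geometrization for finite $\pi_1$ and the closed non-Haken case), which is exactly the case analysis you unfold. Your added care in routing atoroidal Seifert fibered manifolds (the Jaco--Shalen list together with $D^2\times \mathbb S^1$, $T^2\times I$, $K\widetilde\times I$) through the Seifert branch rather than Hyperbolization matches the paper's own caveat about the atoroidal/homotopically-atoroidal distinction.
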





 Given a compact $3$-manifold $X$, we shall call  the splitting of the fundamental group of $X$ as a graph of groups induced by the prime decomposition of $X$, or by   the JSJ-decomposition (when $X$ is irreducible)  the  {\em canonical splitting} of $\pi_1(X)$.
We shall say that  $X$ has a \textit{non-elementary, canonical, $k$-acylindrical splitting} if the action of $\pi_1(X)$ on the Bass-Serre tree associated to the canonical splitting  is non-elementary and $k$-acylindrical.

\begin{dico}[Geometric vs acylindrical splitting]${}$\\
	Let $X$ be a compact, orientable $3$-manifold with no spherical boundary components.\\ Then, either $X$ is  geometric  or $\pi_1(X)$ has a non-elementary, canonical $4$-acylindrical splitting. The two possibilities are mutually exclusive.
\end{dico}

\begin{rmk}
The dichotomy clearly does not hold in presence of spherical   boundary (as excising  an arbitrary number of disjoint balls from a geometric manifold does not change the fundamental group). Moreover,  we stress the fact  that the above dicothomy does not assert that fundamental groups of geometric, compact $3$-manifold do not admit  acylindrical splittings, different from the canonical one, as we shall see in the Example \ref{Schottky}.
\end{rmk}

\begin{proof}[Proof of the dicothomy]
	Assume first that $X$ is a compact, orientable $3$-manifold, whose prime decomposition is non-trivial. 
 Then, $X$ has at least two non-simply connected prime pieces (because, since $X$ has no spherical boundary components, the first piece $X_0$ given by the prime decomposition is empty).
Then, either $X$ is homeomorphic to $\R P^3\#\R P^3$ or the action of $\pi_1(X)$ on the Bass-Serre tree associated to the prime splitting is non-elementary 
(since the action of any non-trivial free product different from  $\Z_2 \ast \Z_2$ on its Bass-Serre tree does not have any globally invariant line).
In the first case observe that $\R P^3 \#\R P^3$ is the unique orientable non-prime, Seifert fibered space (see \cite{AFW} pg. 10) and, in particular, admits a geometry  modelled on  $\mathbb{S}^2 \times \mathbb{R}$ (see \cite{Scott}). Otherwise, since  the edge stabilizers in the prime splitting are trivial  and at least one vertex group is different from $\mathbb{Z}_2$, the prime splitting is  $0$-acylindrical.	\\
	Let us assume now that $X$ is a prime, compact $3$-manifold; we may actually  assume that $X$ is irreducible,  as $\mathbb{S}^2\times \mathbb{S}^1$ is  geometric.  If the JSJ-decomposition of $X$ is trivial, then $X$ is geometric, in view of Fact \ref{geometrization}, and  the canonical splitting of $\pi_1(X)$ is elementary. 
On the other hand,  in \cite{WiZa} Wilton and Zalesskii prove that if $X$ is a closed, orientable, irreducible $3$-manifold, then  either $X$ admits a finite sheeted covering space that is a torus bundle over the circle, or the  JSJ-splitting is $4$-acylindrical. 
The same result holds for compact, irreducible  manifolds (see for details  \cite{cer_pi1}, where the precise constants  of acylindricity  of the splitting of $\pi_1(X)$ as an amalgamated or a HNN-extension over the peripheral groups is computed, according to the different types of the adjacent JSJ-components).\\
Now, compact, orientable, irreducible $3$-manifolds with non-trivial JSJ-decompo\-si\-tion, which are finitely covered by a torus bundle, are either equal to a \textit{twisted double} $D(K\widetilde \times I, A)$ or to a \textit{mapping torus} $M(T^2,A)$, for a gluing map $A\in\mathrm{SL_2(\Z)}$ such that, respectively, $JAJA^{-1}$ and  $A$ are Anosov 
(where $ J(x,y)=(-x,y)$,
 see Theorems 1.10.1, 1.11.1 in \cite{AFW}). In both cases the resulting manifolds admit a $Sol$-metric (Theorem 1.8.2 \cite{AFW}), hence they are geometric. \\
It remains to show that the $4$-acylindrical splitting is non-elementary. Actually as $X$ has a non-trivial JSJ-decomposition,  it is clear that the  action of $\pi_1(X)$ is not elliptic;  moreover, if it was linear then $\pi_1(X)$ would be virtually cyclic, by Lemma \ref{gensys}, which  contradicts the fact that $\pi_1(X)$ contains a rank $2$ free abelian subgroup.
\end{proof}

\subsection{Systolic and volume estimates, local rigidity and finiteness}  
${}$

\vspace{-3mm}
\begin{proof}[Proof of Theorem \ref{app_MTHM_1}] In view of the above Dicothomy,    $\pi_1(X)$ admits a non-ele\-men\-tary, canonical $4$-acylindrical splitting. By assumption,  $\pi_1(X)$ is torsionless, so we can apply 
Theorem \ref{MAINTH} to deduce 
	\small
	$$\sys\pi_1(X)\ge\frac{1}{E}\log\left(1+\frac{4}{e^{26\,E\,D}-1}\right)=s_0(E,D)$$
	\normalsize
\end{proof}

\begin{proof}[Proof of Corollary \ref{app_MTHM_1v}]
Let $X\!=\! X_0 \#  \cdots \#X_m$ be the prime decomposition of $X$. \linebreak
Since $X$ is closed and different from $\#_k (\mathbb{S}^2 \times \mathbb{S}^1)$, the  piece $X_0$ is empty and there exists at least a prime piece, say $X_1$, which is closed and irreducible. Moreover, since $X$ has torsionless fundamental group, $X_1$ is aspherical, and the existence of a degree one projection map $X \rightarrow X_1$ shows that $X$ is $1$-essential. 
Since we know that the systole of $X$ is bounded below by $s_0 (E,D)$,  we can apply Theorem 1.0.A. in \cite{gro_frm} to obtain the estimate $\Vol(X)\ge C\cdot s_0(E,D)^3$.
\end{proof}

\begin{proof}[Proof of Theorem \ref{rigidity}]
 Consider $X, X'\in\mathscr M_{ngt}^{\partial}(E,D)$. By Theorem \ref{app_MTHM_1} we know that the systoles of $X$ and $X'$ are bounded below by $s_0(E,D)$; then, also their  semi-locally simply connectivity radius $r(X_i)$ 
 \footnote{The semi-locally simply connectivity radius of a $X$ is the supremum of $r$ such that every closed curve in a ball of radius $r$ is homotopic to zero in $X$.}
 is bounded below by $\frac12 s_0(E,D)$. 
 Now, two compact Riemannian manifolds with $d_{GH}(X_1,X_2) < \frac{1}{20}\min\{r(X_1), r(X_2)\}$  have isomorphic fundamental group, as proved by  Sormani and Wei \cite{sowei} (as a consequence of  \cite{tusch}, Theorem (b)).
This  proves (i). 
%
To show (ii), assume  that, moreover,  $X$ and $X'$ are irreducible:  since their fundamental group is torsionless, they are aspherical, and then homotopy equivalent by Whitehead's Theorem.

\end{proof}

\begin{proof}[Proof of Corollary \ref{topfiniteness}]
 By Theorem  \ref{rigidity} (i) we know that given $X \in \mathscr M_{ngt}^{\partial}(E,D)$,  there exists a $\delta_0=\delta_0(E,D)$ such that every other manifold $X'$ in $\mathscr  M_{ngt}^{\partial}(E,D)$ which is $\delta_0$-close to $X$ has the same fundamental group as $X$.
Now,  recall that, by  results of Swarup  \cite{swa}, there is a finite number of irreducible, compact $3$-manifolds with a given fundamental group.
By the Prime Decomposition Theorem (as stated in Section \S4.1), and by uniqueness of the decomposition of a group as a free product, this is also true for (possibly reducible) compact $3$-manifolds, without spherical boundary components  (recall that $\mathbb{S}^2 \times \mathbb{S}^1 $ is the only prime, \linebreak not irreducible, orientable manifold without spherical boundary components).  \linebreak
We then conclude that the ball at $X$ of radius $\delta_0$ in $\mathscr M_{ngt}^{\partial}(E,D)$ contains only a finite number of homeomorphism (and then diffeomorphisms) types.
\end{proof}

 Corollary \ref{diffrigidity} is a particular case of the following:

\begin{prop}\label{diffrigiditytech}
	Let $X, X'\in\mathscr M_{ngt}^{\partial}(E,D)$ with $X$ be irreducible. Assume that $d_{GH}(X,X')<\delta_0$, for  $\delta_0=\delta_0(E,D) $ as in Theorem \ref{rigidity}:\\
(i) if $\partial X$ is incompressible, then $X'$ is homotopy equivalent to $X$;\\
(ii) if $\partial X=\varnothing$, then $X$ is diffeomorphic to $X'$.

\end{prop}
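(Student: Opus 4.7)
The strategy is to feed Theorem \ref{rigidity}(i) into the Borel--Waldhausen rigidity machinery for irreducible aspherical $3$-manifolds. Since $d_{GH}(X,X')<\delta_0$, that theorem already provides an isomorphism $\varphi:\pi_1(X')\stackrel{\sim}{\to}\pi_1(X)$; the task is to promote $\varphi$ to a homotopy equivalence for (i), and then to a diffeomorphism for (ii).

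\smallskip

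The key step for (i) is to show that $X'$ is itself irreducible. By the Prime Decomposition Theorem, write $X'=X_1'\#\cdots\# X_m'$; since $X'$ has no spherical boundary components, the punctured-sphere piece is empty and each $X_i'$ is a prime $3$-manifold with no spherical boundary, so that $\pi_1(X')=\pi_1(X_1')\ast\cdots\ast\pi_1(X_m')$ by Van Kampen. The assumption that $\partial X$ is incompressible (or empty) allows Kneser's Conjecture to be applied to $X$: since $X$ is irreducible, any non-trivial free-product decomposition of $\pi_1(X)$ would produce a non-trivial connected-sum decomposition of $X$, a contradiction, so $\pi_1(X)$ is freely indecomposable. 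Combined with the fact that a prime $3$-manifold with trivial fundamental group and no spherical boundary must be $\mathbb S^3$ (Perelman) and is therefore not counted in the prime decomposition, this forces $m=1$, and $X'$ is prime. Moreover, $X$ is irreducible and non-geometric, so by Fact \ref{geometrization} it has non-trivial JSJ, hence $\pi_1(X)\cong\pi_1(X')$ contains a copy of $\Z^2$; this excludes $X'\cong \mathbb S^2\times \mathbb S^1$, and so $X'$ is irreducible. Both $X$ and $X'$ are now compact, irreducible $3$-manifolds with torsionless infinite fundamental group, hence aspherical $K(\pi,1)$-spaces by the equivalences listed in the introduction; Whitehead's theorem then realises $\varphi$ by a homotopy equivalence $X\simeq X'$, which proves (i).

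\smallskip

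For (ii) the hypothesis $\partial X=\varnothing$ is in particular (vacuously) incompressible, so (i) gives $X\simeq X'$; since $X$ is closed, $H_3(X';\Z/2)\cong H_3(X;\Z/2)\neq 0$, forcing $\partial X'=\varnothing$ as well. Thus $X$ and $X'$ are closed, aspherical $3$-manifolds with torsionless fundamental group, and the solution of the Borel conjecture in this setting (Waldhausen in the Haken case, Turaev \cite{tur} combined with Perelman's Geometrization in the non-Haken case, as recalled in the introduction) upgrades the homotopy equivalence to a homeomorphism; Moise--Munkres--Whitehead then promotes the homeomorphism to a diffeomorphism. The main technical obstacle in this plan is the irreducibility step for $X'$: a priori the class $\mathscr M_{ngt}^{\partial}(E,D)$ allows reducible manifolds, and excluding this possibility requires the combination of Kneser's Conjecture applied to $X$, the essential uniqueness of free-product decompositions, Perelman's Elliptization and Poincar\'e conjectures, and the non-geometricity hypothesis on $X$.
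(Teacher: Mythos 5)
Your proposal is correct and follows essentially the same route as the paper: use Theorem \ref{rigidity}(i) to get the $\pi_1$-isomorphism, apply Kneser's Conjecture to the irreducible manifold $X$ (with incompressible or empty boundary) to deduce free indecomposability of $\pi_1(X)$, feed this into the Prime Decomposition Theorem for $X'$ to conclude $X'$ is prime (and, being non-geometric, not $\mathbb S^2\times\mathbb S^1$, hence irreducible), then invoke asphericity plus Whitehead for the homotopy equivalence and the Borel--Waldhausen--Turaev--Perelman plus Moise--Munkres--Whitehead chain to upgrade to a diffeomorphism in the closed case. The one small divergence is cosmetic: to rule out $X'\cong\mathbb S^2\times\mathbb S^1$ the paper simply observes that $X'\in\mathscr M_{ngt}^\partial$ is non-geometric by hypothesis, whereas you argue via $\Z^2\hookrightarrow\pi_1(X)$; both are valid, but the paper's is more direct and avoids invoking the JSJ structure of $X$.
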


\begin{proof}
	Let us prove (i). 
	By Theorem \ref{rigidity} (i) we deduce that $\pi_1(X)\cong\pi_1(X')$, and this group is indecomposable, by Kneser's Conjecture.
	As $X'$ has no spherical boundary components, it follows from the Prime decomposition Theorem that $X_0'$ is empty and $X'=X_1'$; better, since it is not geometric, it is different from $\mathbb{S}^2 \times \mathbb{S}^1$ and so it is irreducible too. We can then apply  Theorem \ref{rigidity} (ii) to deduce that $X'$ is homotopically equivalent to $X$. \\
	For (ii), we  deduce as in (i) that $X'$ is  homotopy equivalent to $X$, an then closed. This implies that $X'$ is homeomorphic (and actually diffeomorphic) to $X$, by the discussion after Corollary \ref{diffrigiditytech} in Section \S1.
\end{proof}


\begin{proof}[Proof of Corollary \ref{fnt_glo}] 
By Bishop's comparison theorem it follows that the space $\mathscr M_{ngt}(Ric_K,D)$ is included in $  \mathscr M_{ngt}(2K,D)$. Moreover, Gromov's precompactness theorem asserts that the family $\mathscr M_{ngt}(Ric_K,D)$ is precompact; therefore, for any arbitrary $\delta>0$,  this space can be covered by a finite number of balls of radius $\delta$. Taking $\delta = \delta_0(2K,D)$, where $\delta_0$ is the function in Theorem \ref{rigidity},  and using Corollary \ref{topfiniteness}  we infer the finiteness of the diffeomorphism types in   $\mathscr M_{ngt}(Ric_K,D)$.
\end{proof}

\pagebreak

\begin{rmk} {\em Is the peripheral structure preserved by Gromov-Hausdorff approximations?}
	We recall that the \textit{peripheral structure} of a $3$-manifold $X$ with incompressible boundary is the data of the fundamental group $\pi_1(X)$ together with the collection of the conjugacy classes of  subgroups determined by the boundary components.  \linebreak
	 Let $X_1$ and $X_2$ be two compact, orientable, irreducible $3$-manifolds with non-spherical,  incompressible boundary.  Waldhausen (\cite{Wal}) proved that any isomorphism $\varphi:\pi_1(X_1)\f\pi_1(X_2)$  sending the peripheral structure of $X_1$ into the peripheral structure of $X_2$   is induced by a homeomorphism. It is not known to the authors if 
 the isomorphism between the fundamental groups induced from  a Gromov-Hausdorff $\varepsilon$-approximation $f: X_1 \leftrightarrows X_2$, with $\epsilon$ sufficiently small,   preserves the peripheral structure. If this was the case, then Corollary \ref{diffrigidity} would hold for all non-geometric, irreducible manifolds with (possibly empty) incompressible boundary.
\end{rmk}

\section{Examples}
	We give here a collection of examples (which do not satisfy the assumptions of Theorems \ref{app_MTHM_1}, \ref{app_MTHM_1v}), where the systole or the volume can be collapsed while keeping entropy and diameter bounded.
	
	\begin{exmp}\label{excoll} {\em Collapsing the systole of geometric $3$-manifolds.}\\ 
		For each  model geometry  different from  $\mathbb H^3$, we can exhibit a closed Riemannian manifold $X$ and a sequence of metrics $h_{\mathbb G}^{\varepsilon}$, for $\varepsilon\in(0,1]$,  such that $\ent(X, h_{\mathbb G}^{\varepsilon})\le E$, $\diam(X, h_{\mathbb G}^{\varepsilon})\le D$ and $\sys\pi_1(X, h_{\mathbb G}^{\varepsilon})\f 0$.\\
		This is trivial for ${\mathbb G} = {\mathbb S}^3, {\mathbb S}^2 \! \times \! {\mathbb R}, {\mathbb E}^3$ and  $Nil$, which have sub-exponential growth: just take the standard sphere, ${\mathbb S}^2 \times {\mathbb S}^1$, any flat torus $T$, and the quotient $H^3_{\mathbb Z} \backslash Nil$ of the Heisenberg group by the  standard integral lattice, and scale the model metric by $\epsilon$. The systole and diameter collapse, while the entropy is always zero.\\ 		
		\noindent For  ${\mathbb G} =   \mathbb H^2\times\mathbb R$, $ \mathbb H^2\widetilde\times\mathbb R$, we can just take the Riemannian product  $X=S_g \times {\mathbb  S}^1$ of a closed hyperbolic surface $S_g$    of genus $g\ge 2$ with the circle, and the  unitary tangent bundle $X=U S_g$ of $S_g$ with its Sasaki metric; then, we contract by $\epsilon$  the model metrics $h_{\mathbb G} $ along the fibers of the ${\mathbb  S}^1$-fibration $X \rightarrow S_g$. In both cases, the  sectional curvature of  the new metrics $h^\epsilon_{\mathbb G} $ stays bounded, 
		as  $X$ admits a free, isometric action of ${\mathbb  S}^1$ along the fibers   (a pure, polarized  $F$-structure, cp.  \cite{chegro}); thus, the entropy is bounded uniformly, while the systole collapses ( and $X$ tends to $S_g$ in the Gromov-Hausdorff distance). \\ Notice that, in the second case, the collapse is through non-model metrics.

		
		\noindent  In the last case consider the group ${\mathbb G} =Sol$, defined, for any hyperbolic endomorphism $A \in SL(2,{\mathbb Z})$ with eigenvalues $\lambda^{\pm1}$,  as the semidirect product  ${\mathbb R} ^2 \rtimes_A {\mathbb R} $, with   ${\mathbb R} $  acting on ${\mathbb R} ^2$ as   $A^t$, and endowed with the canonical left-invariant metric (in the diagonalizing coordinates $(x,y)$):
		$$\mathrm{h}_{Sol}=\lambda^{2t}\,dx^2\oplus \lambda^{-2t}\,dy^2\oplus dt^2$$ 
		Consider the quotient $X^\epsilon$ of $Sol$ by the discrete subgroup of isometries $\Gamma^{\varepsilon}$  generated by the lattice $\epsilon {\mathbb Z} ^2$ (acting by translations on the $xy$-planes)  and by the isometry $s (u, t)\mapsto (A u, t+1)$.
		The manifolds $X^\epsilon$ are diffeomorphic, with  $\sys(X^\epsilon) \rightarrow 0$ and bounded diameter; on the other hand, they all have isometric universal covering, thus  $\ent(X^\epsilon, h_{Sol})$ is equal to the exponential growth rate of $\ent(Sol, \mathrm{h}_{Sol})$ for all $\varepsilon\in(0,1]$.  
	\end{exmp}

%
\pagebreak

\begin{exmp}{\em Collapsing the volume of the connected sum $\#_k({\mathbb S}^2\times {\mathbb S}^1)$.}${}$
	\label{collapsingS2S1}

		\noindent We shall construct, for $\varepsilon\in(0,1]$,  a family of metrics $g_{\varepsilon}$  on the connected sum $kX= X_1 \#\cdots \# X_k$ of $k$ copies  $X_i={\mathbb S}^2\times {\mathbb S}^1$, such that $\sys\pi_1(kX,g_{\varepsilon})\sim 2\pi$, $\diam(kX, g_{\varepsilon})\le D$, $\ent(kX, g_{\varepsilon})\le E$ for all $\epsilon$, while the volume  goes to $0$ as $\varepsilon\f 0$. \\
		Consider the canonical product metric $h=h_{\mathbb S^2}\oplus h_{\mathbb S^1}$ on ${\mathbb S}^2\times {\mathbb  S}^1$.
		We construct  $g_{\epsilon}$ by scaling $h_{\mathbb S^2}$ by $\epsilon$ and gluing the $k$ copies of ${\mathbb S}^2\times {\mathbb  S}^1$ through a thin, flat cylinder. \linebreak Namely,  two base points   $x_i^\pm$ on $X_i$ being chosen (with $x_1^+=x_1^-$ and $x_k^+=x_k^-$),  let  $ h_\epsilon = \epsilon^2 h_{\mathbb S^2}\oplus h_{\mathbb S^1}$    and let $r_\epsilon = \inj ( {\mathbb S}^2\times {\mathbb  S}^1,  h_\epsilon )$. We  write the metric in each copy in polar coordinates around $x_i^\pm$ as
		$$  h_\epsilon  = \varphi_\epsilon^2 (r,u) h_{\mathbb S^2} + dr^2$$
		and modify $h_\epsilon$ around the points $x_i^\pm$ into a new metric $\tilde{h}^i_\epsilon$ on  $X_i \setminus \{ x_i^\pm \}$,   which interpolates,  on the annulus $B_{h_\epsilon } (x_i^{\pm},  \epsilon r_\epsilon) \setminus B_{h_\epsilon } (x_i^{\pm}, \epsilon^2 r_\epsilon)$,  between $  h_\epsilon$ and the  product metric $(\epsilon^2r_\epsilon)^2  h_{\mathbb S^2} + dr^2$ of the cylinder $(\epsilon^2r_\epsilon) {\mathbb S}^2 \times {\mathbb S}^1$; 
		finally, we glue the copies $(X_i \! \setminus \!  \{x_i^{\pm}\}, \tilde h^i_\epsilon)$ and $(X_{i+1} \! \setminus \!  \{x_{i+1}^{\pm}\}, \tilde h_\epsilon^{i+1})$ to obtain $(kX,g_\epsilon)$, by identifying the flat  $\epsilon^2 r_\epsilon$-annulus around $x_i^-$ to the corresponding annulus around $x_{i+1}^+$ via an  isometry interchanging the boundaries.\\
		It is then easy to check that the manifolds $(kX, g_{\varepsilon})$  converge in the Gromov-Hausdorff distance to the length space given by the wedge $X_0=\vee_{x_i,...,x_k} {\mathbb S}^1$ of $k$ copies of  the standard circle $  {\mathbb S}^1$ with respect to appropriate points $x_1,...,x_k$. Notice that by construction we have  $\diam(kX, g_{\varepsilon})\le  k \pi+1$,  that the systole of $(kX, g_{\varepsilon})$ is  bounded from below by $2\pi-1$ for all sufficiently small  $\epsilon$,  and that 
		clearly  $\Vol(kX, g_{\varepsilon})\f 0$. Moreover,  the entropy of all these manifolds is uniformly bounded from above by $Ent(X_0) +1$, for  $\epsilon \rightarrow 0$;  this follows for instance from \cite{rev}, Proposition 38.
	
\end{exmp}

	Finally, we give examples  of $3$-manifolds with different topology, which  are arbitrarily close in the Gromov-Hausdorff distance, while satisfying entropy and diameter uniform bounds. 
	
 \begin{exmp}{\em Manifolds with spherical boundary components.\label{nonnprig} ${}$} ${}$\\
Take any  closed, irreducible Riemannian  $3$-manifold $X$ with  $\sys \pi_1(X) \ge1 $, and remove a disjoint collection of $n$ balls $B(x_i,\epsilon)$, for arbitrarily small $\epsilon$.  
		The resulting, reducible manifold $X_{n,\epsilon}$ with spherical boundary has  the same fundamental group as $X$, while being not   homotopically equivalent to $X$.  $X_{n,\epsilon}$ clearly is $(2n \pi \epsilon)$-close to $X$,  as the metric on a sufficently small ball around $x_i$ can be approximated by the Euclidean one; hence $\diam(X_{n,\epsilon}) \leq \diam (X) + 2n \pi \epsilon$ too.
		It is easy to verify that, for small values of $\epsilon$, the orbits of $G=\pi_1(X) \cong \pi_1(X_{n,\epsilon})$, on the respective Riemannian universal converings,  are $\left(1+ \frac{3n\pi}{ \sys (X)} \right)$-biLipschitz  to each other; this implies the entropy bound $Ent(X_{n,\epsilon}) \leq (1+ \frac{3n\pi}{ \sys (X)}) Ent(X)$.  
		\vspace{1mm}
		\end{exmp}	
		
\begin{exmp}{\em Connected sums of hyperbolic manifolds.\label{nonnprig} ${}$} ${}$\\			
		\noindent   Let $(X,h)$ be a closed hyperbolic $3$-manifold with  no orientation reversing isometries  (see \cite{mullner}), 
		 and 
		denote by $\overline X$  the same hyperbolic manifold endowed with the opposite orientation. 
		We know by standard differential topology that $X\# X$ and $X\# \overline X$ are not diffeomorphic; hence, by the discussion in Section \S1, they are not even homotopically equivalent.
		Now, remove from  $X$ and $\overline X$ small geodesic balls $B_h(x_0, \epsilon)$  of radius $\epsilon \ll \inj(X)$. 
		As in the Example \ref{collapsingS2S1},  we  modify the metric $h$ around  $x_0$ into a new metric $h_\epsilon$ which interpolates, on the annulus \linebreak $B_{h} (x_0, \epsilon) \setminus B_{h} (x_0, \epsilon^2)$, 
		between $h$ and the  product metric $\epsilon^4 h_{\mathbb S^2} + dr^2$;
		then, we glue together  the two copies of $(X  \! \setminus \!  \{x_0^{\pm}\}, h_\epsilon)$ 
		by identifying the two  cylinders $S^2 \times (\epsilon^2 , 0)$ via  an  orientation-reserving   (resp.  orientation-preserving)  isometry interchanging the boundaries,  to obtain a  Riemannian connected sum $Y_\epsilon=(X \#X, g_\epsilon)$   (resp.  $\bar Y_\epsilon =(X \# \bar X, \bar g_\epsilon)$.
		Then,  it is easy to show that both manifolds tend in the Gromov-Hausdorff topology to the length space given by the metric wedge $X \vee_{x_0} \! X$; hence they are arbitrarily close to each other for $\epsilon \rightarrow 0$, with diameters bounded by  $2\diam(X)+1$.
		Moreover, the systoles is uniformly bounded from below by $\sys\pi_1(X)/2$, so by \cite{rev}, Proposition 38, we deduce  that their entropies converge to  $Ent( X \vee_{x_0} \! X)$ and are uniformly bounded.
	\end{exmp}


\begin{exmp}{\em Hyperbolic manifolds with acylindrical splittings}\label{Schottky} \\
A \textit{handlebody} $H_g$ of genus $g> 0$ is,   topologically, the  $\varepsilon$-neighbourhood in $\R^3$  of a wedge  sum of $g$  circles; handlebodies  are classified by their genus.   The boundary of $H_g$ is an orientable, closed surface of genus $g$, and $\pi_1(H_g)\cong\mathbb F_g$; in particular, the fundamental group of $H_g$, for $g\ge 2$, is the non-trivial free product of $g$ infinite cyclic groups, hence it  admits a $0$-acylindrical splitting. It is not difficult to show that the interior of the handlebodies admits complete hyperbolic metrics: for $g \geq 2$,   it is sufficient to identify $H_g$ with the quotient of $\mathbb H^3$ by a Schottky  group of hyperbolic isometries,  generated by  $g$ hyperbolic translations,   with  disjoint axes and disjoint attractive and repulsive domains.
\end{exmp}

\end{document}